\DeclareRobustCommand\widecheck[1]{{\mathpalette\@widecheck{#1}}}
\def\@widecheck#1#2{%
   \setbox\z@\hbox{\m@th$#1#2$}%
   \setbox\tw@\hbox{\m@th$#1%
      \widehat{%
         \vrule\@width\z@\@height\ht\z@
         \vrule\@height\z@\@width\wd\z@}$}%
   \dp\tw@-\ht\z@
   \@tempdima\ht\z@ \advance\@tempdima2\ht\tw@ \divide\@tempdima\thr@@
   \setbox\tw@\hbox{%
      \raise\@tempdima\hbox{\scalebox{1}[-1]{\lower\@tempdima\box\tw@}}}%
   {\ooalign{\box\tw@ \cr \box\z@}}}
\newtheorem{theorem}{Theorem} [section]
\newtheorem{lemma}[theorem]{Lemma}
\newtheorem{proposition}[theorem]{Proposition}
\newtheorem{remark}[theorem]{Remark}
\newtheorem{definition}[theorem]{Definition}
\newcommand{\Z}{\mathbb{Z}}
\newcommand{\R}{\mathbb{R}}
\newcommand{\T}{\mathbb{T}}
\newcommand{\PP}{\mathbb{P}}
\newcommand{\ff}{\vec f}
\newcommand{\vv}{\vec v}
\newcommand{\uu}{\vec u}
\newcommand{\ww}{\vec w}
\newcommand{\bk}{{\bf{k}}}
\newcommand{\be}{{\bf{e}}}
\newcommand{\DD}{\mathcal D}
\newcommand{\ft}{\widehat}
\numberwithin{equation}{section}
\begin{document}

\title[Supercritical Navier Stokes and a.s. global existence]{
Almost sure existence of global weak solutions for super-critical Navier-Stokes equations}

\author[Nahmod]{Andrea R. Nahmod$^1$}
\address{$^1$  
Department of Mathematics \\ University of Massachusetts\\  710 N. Pleasant Street, Amherst MA 01003}
\email{nahmod@math.umass.edu}
\thanks{$^1$ The first author is funded in part by NSF DMS 0803160.}

\author[Pavlovi\'c]{Nata\v{s}a Pavlovi\'c$^2$}
\address{$^2$  
Department of Mathematics\\ 
University of Texas at Austin\\ 
2515 Speedway, Stop C1200\\
Austin, TX 78712}
\email{natasa@math.utexas.edu}
\thanks{$^2$ The second author is funded in part by NSF DMS 0758247, NSF DMS 1101192 and an Alfred P. 
Sloan Research Fellowship.}

\author[Staffilani]{Gigliola Staffilani$^3$}
\address{$^3$ Department of Mathematics\\
Massachusetts Institute of Technology\\ 
77 Massachusetts Avenue,  Cambridge, MA 02139}
\email{gigliola@math.mit.edu}
\thanks{$^3$ The third author is funded in part by NSF
DMS 1068815.}

\date{}

\begin{abstract}
In this paper we show that after suitable data randomization there exists  a large set of super-critical periodic initial data, 
in $H^{-\alpha}({\mathbb T}^d)$ for some $\alpha(d) > 0$, for both 2d and 3d Navier-Stokes equations for which global energy bounds hold. 
As a consequence, we obtain almost sure large data super-critical global weak solutions. We also show that in 2d these global weak solutions are unique. 
\end{abstract}
\maketitle
\section{Introduction} 

Consider the initial value problem for the incompressible Navier-Stokes equations given by
\begin{equation}\label{NS}
\left\{\begin{array}{ll}
\partial_t \vec u =\Delta \vec u -\PP\nabla 
\cdot (\vec u\otimes\vec u); \qquad  x \in {\mathbb T}^{d} \, \, \mbox{or} \, \, \R^d, \, t>0\\
  \nabla \cdot \vec u=0 \\
  \vec u( x, 0)  = \vec f(x),
\end{array}\right.
\end{equation}
 where  $f $ is divergence free  and $\PP$ is the Leray projection into divergence free vector fields given via
\begin{equation}\label{div}
\PP\vec h=\vec h -\nabla\frac{1}{\Delta}(\nabla\cdot \vec h).
\end{equation}  

It is well-known  that global well-posedness of  \eqref{NS}
when the space dimension $d = 3$ 
is a long standing open question. 
This is related to the fact that the equations \eqref{NS} are so called 
super-critical when $d > 2$. Indeed, recall that   if the velocity vector field $\vec u(x,t)$ solves the Navier-Stokes
equations \eqref{NS} then $\vec u_{\lambda}(x,t)$ with
$$ \vec u_{\lambda}(x,t) = \lambda \vec u(\lambda x, \lambda^2 t),$$
is also a  solution to the system \eqref{NS},  for the initial data
\begin{equation} \label{scaluz} 
	\vec u_{0 \; \lambda} = \lambda \vec u_0 (\lambda x) \;\;. 
\end{equation}
The spaces which are
invariant under 
such a scaling are called critical spaces for the Navier-Stokes equations. Examples
of critical spaces for the Navier-Stokes are: 
\begin{equation} \label{embed} 
 	\dot{H}^{\frac{d}{2}-1} \hookrightarrow L^{d}  
	\hookrightarrow  \dot{B}_{p, \infty}^{-1+\frac{d}{p}}  
	\hookrightarrow BMO^{-1},  \quad \quad 1< p < \infty.
\end{equation} 

In particular, for Sobolev spaces,  
$ \| \vec u_{\lambda}(x, 0) \|_{\dot H^{s_c}} = \| \vec u(x, 0) \|_{\dot H^{s_c}}, $ 
when $s_c = \frac{d}{2} - 1$. We recall that the exponents  $s$ are called 
{\it critical} if $s = s_c$, {\it sub-critical} if $s > s_c$ and  {\it super-critical} if $s < s_c$. 

On the other hand, classical solutions to the \eqref{NS} satisfy the decay of energy 
which can be expressed as: 
\begin{equation} \label{energy} 
 \|u(x, t)\|_{L^2}^2 + \int_{0}^{t} \|\nabla u(x,\tau)\|_{L^2}^2 \; d\tau = \|u(x, 0)\|_{L^2}^2.
\end{equation} 
Note  that when $d=2$, the energy $ \|u(x, t)\|_{L^2}$, which is globally controlled thanks to \eqref{energy}, 
is  exactly  the scaling invariant $\dot H^{s_c}=L^2$-norm. In this case the equations are said to be  {\it critical}. 
When  $d=3$, the energy $ \|u(x, t)\|_{L^2}$ is at the super-critical level  with respect to the scaling invariant $\dot H^{\frac{1}{2}}$-norm, 
and hence the Navier-Stokes equations are said to be {\it super-critical} and the lack of a known bound for  the $\dot H^{\frac{1}{2}}$ contributes in keeping  the global well-posedness question for the initial value problem \eqref{NS} still open.

One way of studying the Navier-Stokes initial value problem \eqref{NS} is via weak solutions 
introduced in the context of  these equations by 
Leray \cite{Leray1, Leray2, Leray3} in 1930s. Leray \cite{Leray3} and Hopf \cite{Hopf} 
showed existence of global weak solutions of the Navier-Stokes equations corresponding to  initial data in 
$L^2({\mathbb R}^d)$.  When $d=2$ classical global solutions were later obtained  by Ladyzhenskaya \cite{Lady}.  
Lemari\'e-Rieusset generalized Leray's construction to prove  existence of uniformly locally square integrable weak solutions, for details see \cite{LR}. 
However questions addressing uniqueness and regularity of these solutions when $d=3$ have not 
been answered yet, although there are many important contributions in understanding partial regularity 
and conditional uniqueness of weak solutions, see e.g. Caffarelli-Kohn-Nirenberg \cite{CKN}, 
Lin \cite{Lin}, Escauriaza-Seregin-\v{S}verak \cite{ESS},  Vasseur \cite{Va}. 
Another  approach in studying existence of solutions to \eqref{NS} is to construct 
solutions to the corresponding integral equation via a fixed point theorem. In such a way one obtains 
so called `mild' solutions.  This approach was pioneered by Kato and Fujita,  see for example \cite{FUJK64}.  
However the existence of mild solutions to the Navier-Stokes equations in ${\mathbb R}^{d}$ for $d \geq 3$ 
has been obtained only locally in time and globally for small initial data in various sub-critical or critical spaces, see e.g. Kato
\cite{KAT84}, Cannone \cite{CANN-1, CANN-2}, Planchon \cite{PLAN},  Koch and Tataru \cite{KT01}, Gallagher and Planchon \cite{GP}, or globally in time under conditions on uniform in time boundedness of certain scaling invariant norms, see e.g. Kenig-Koch \cite{KeKo}.  In this context,  Cannone and Meyer \cite{CANN-MEY} proved that 
if  $f \in X$,  a {\it well-suited} Banach space for the study of the Navier-Stokes then the {\it fluctuation} $\ww: = \vec u - e^{t\Delta}\vec f$ is `better' in that it belongs to the Besov-type space ${\dot B}^0_{X, 1}$.

In this paper we consider the periodic Navier-Stokes problem in \eqref{NS} and in particular we address the question of long time existence  of weak solutions for super-critical initial data both in $d=2,\, 3$, see also Tao \cite{Tao}. For $d=2$ we address uniqueness as well. Our  goal  is to show that by randomizing in an appropriate way the  initial data in 
 $H^{-\alpha}({\mathbb T}^d), d=2, 3$ (for some $\alpha= \alpha(d)>0$) which is below the critical threshold 
space $H^{s_c}({\T}^d)$, as well as below the space $L^2$ where one has available deterministic constructions of weak solutions, one can construct  a global in time weak solution to \eqref{NS}. Such solution is unique when $d=2$.
Similar well-posededness results for randomized data were obtained for the super-critical nonlinear Schr\"odinger equation by Bourgain \cite{Bou2d} and for super-critical nonlinear wave equations by Burq and Tzvetkov in \cite{BTa, BTb, BTc}.
The approach of Burq and Tzvetkov was applied in the context of the Navier-Stokes in order to obtain local in time solutions to the corresponding integral 
equation for randomized initial data in $L^2({\mathbb T}^3)$, as well as global in time solutions to the corresponding integral equation for randomized initial data that are small 
in $L^2({\mathbb T}^3)$ by Zhang and Fang \cite{ZhFa-pams} and by Deng and Cui \cite{DC-1}.
Also in \cite{DC-2}, Deng and Cui obtained local in time solutions to the corresponding integral 
equation for randomized initial data in $H^s({\mathbb T}^d)$, for $d=2,3$ with $-1 < s < 0$.  

The paper at hand is the first to offer a construction of a {\bf global in time weak solution} to \eqref{NS} for randomized initial data (without any smallness assumption) in negative Sobolev spaces $H^{-\alpha}({\mathbb T}^d), d=2, 3$, for some $\alpha= \alpha(d) > 0$. 
Roughly speaking the idea of the proof is the following: we start with a divergence free and mean zero initial data $\vec f \in  (H^{- \alpha}({\T}^d))^d, d=2, 3$ 
and suitably randomize it to obtain $\vec f^{\omega}$ (see Definition \ref{defrand} for details) which in particular preserves the divergence free condition. 
Then we seek  a solution to the initial value problem \eqref{NS} in the form 
$\vec u = e^{t\Delta}\vec f^\omega + \ww$.  In this way,  the linear evolution $e^{t\Delta}\vec f^\omega$  is singled out and the difference equation that $\ww$ satisfies is identified.  At this point it becomes convenient to state the equivalence Lemma \ref{equivalence} between the initial value problem for the difference equation and the integral formulation of it.  This equivalence is similar to Theorem 11.2 in \cite{LR}, see also \cite{FLT}.  We will use the integral equation  formulation near time zero and the other one away from zero (see Section  \ref{sec-estimates}  for more details). The key point of this approach is the  fact  that although the initial data are in 
$H^{-\alpha}$ for some $\alpha>0$, the heat flow of the randomized data gives almost surely improved $L^p$ bounds (see Section \ref{sec-heatfl}).  These bounds in turn yield improved  nonlinear estimates arising in the analysis of the difference  equation for $\ww$  almost surely (see Section \ref{sec-estimates}  for details), 
and consequently a construction of a global weak solution to the difference equation is possible (see Section \ref{sec-galerkin} for details).

It is important to note that,  almost surely in $\omega$, the randomized initial data $\vec f^{\omega}$ belongs to $W^{- \alpha, p}$ for any $p\ge 1 $ and hence it is in $B^{-1 + \frac{d}{p}}_{p, q}$ for $ p$ large enough so that $1 - \frac{d}{p} > \alpha$,  and any $q \ge 2$. In particular, $\vec f^{\omega}$ belongs to the critical Besov spaces for which Gallagher and Planchon \cite{GP} proved, when $d=2$, global existence, uniqueness and suitable bounds.  Since $\vec f^{\omega}$ also belongs to $BMO^{-1}$,  small data (almost sure) well-posedness follows when $d=2, 3$ from Koch and Tataru's result \cite{KT01}.  
The goal of this paper however, is to show that there exists a large set of super-critical periodic initial data of arbitrary size in $H^{-\alpha}({\mathbb T}^d)$,  $d=2, 3$ that evolve to global solutions for which once the linear evolution  is removed we directly obtain energy bounds.

\subsection{Organization of the paper} In Section \ref{sec-notation} we introduce appropriate notation and state the main results. 
In Section \ref{sec-heatfl} we prove some useful bounds for  the heat flow on randomized data. In Section \ref{diff-equiv} we introduce the difference equation for $\ww$ and 
establish two equivalent formulations for the equation that $\ww$ solves. In  Section \ref{sec-estimates}  we prove energy estimates for $\ww$ and in Section \ref{sec-galerkin} we construct weak solutions 
to the difference equation via a Galerkin method. In Section \ref{sec-uniq} we prove uniqueness of weak solutions when $d=2$.  Finally in Section \ref{proof-main} we combine all the ingredients to establish the main theorems.
  
\subsection{Acknowledgements}
The authors would like to thank Cheng Yu for noticing that an earlier version
needed a revision in the energy estimates for the $d=3$ case.

\section{Notation and the statement of the main result} \label{sec-notation}

\subsection{Notation} \label{subsec-not}
In this subsection we list some notation that will be frequently used throughout the paper.
\begin{itemize}
\item Let $\mathcal{B}$ be a Banach  space of functions. The space $C_{\text{weak}}((0,T), \mathcal{B})$ denotes  the  subspace of $L^\infty((0,T), \mathcal{B})$ consisting of functions which are weakly continuous, i.e.  $v\in C_{\text{weak}}((0,T), \mathcal{B}) $ if and only if  $\phi(v(t))$ is a continuous function of $t$ for any $\phi\in \mathcal{B}^*$.
\item If $(X(\T^d))^d$ denotes a space of vector fields on $\T^d$, we simply denote its norm by $\|\cdot\|_X$.
\item We introduce an analogous notation to that of Constantin and Foias in \cite{CF}. In particular we write
\begin{align*} 
	H & = \mbox{the closure of } \{\vec f \in (C^{\infty}(\T^d))^d \, | \, \nabla \cdot \vec f =0 \} \mbox{ in } (L^2(\T^d))^d, \\ 
	V & = \mbox{the closure of } \{\ff \in (C^{\infty}(\T^d))^d \, | \, \nabla \cdot \ff =0 \} \mbox{ in }  ( H^1(\T^d))^d, \\
	V' &= \, \,\mbox{the dual of } V.
	\end{align*}
	\item We finally introduce some notation for the inner products in some of the spaces introduced above. The notation is similar to the one used in \cite{CF}.
	
	Given two vectors $\uu$ and $\vv$ in $\R^d$ we use the notation
	\begin{equation}\label{inner}
	\langle \uu,\vv\rangle=\uu\cdot\vv.
	\end{equation}
		In  $(L^2(\T^d))^d$ we use the inner product notation
	\begin{equation}\label{l2-inner}
	(\uu,\vv)=\int \uu(x)\cdot \vv(x)\,dx.
	\end{equation}
 In $(\dot H^1(\T^d))^d$ we use the inner product notation
\begin{equation}\label{h1-inner}
	((\uu,\vv))=\sum_{i=1}^d (D_i\uu,D_i\vv).
	\end{equation}
\item 	Finally we introduce the  trilinear expression 
\begin{equation}\label{tri}
b(\uu,\vv,\vec w)=\int \uu_jD_j\vv_i {\vec w}_i\, dx=\int \langle \uu \cdot \nabla \vv , \vec w \rangle\,dx.
\end{equation}
Also we note that when $\uu$ is divergence free, we have  
\begin{equation}\label{tri-rev}
b(\uu,\vv,\vec w)= \int \langle \nabla(\vv \otimes \uu), \vec w \rangle\,dx.
\end{equation}

\end{itemize}
\medskip

Finally, as it is now customary,  we use $A \lesssim B$ to denote the estimate $A \leq C B$ for an absolute positive constant C. 

\subsection{A general randomization setup}

Before stating the main theorem we recall a large deviation bound from \cite{BTa}  that we will use below in order to analyze the heat flow on randomized data in Section 
\ref{sec-heatfl}.

\begin{lemma}\label{largedev}[Lemma 3.1 in \cite{BTa}] Let $(l_r(\omega))_{r=1}^{\infty}$ be a sequence of real, $0$- mean, independent random  variables on a probability space $(\Omega,  A, p)$ with associated sequence of distributions $(\mu_r)_{r=1}^\infty$. Assume that $\mu_r$ satisfy the property 
\begin{equation} \label{muassum} 
\exists c > 0 \, : \, \forall \gamma \in \R, \forall r \geq 1, \, | \int_{-\infty}^\infty e^{\gamma x} \, d \mu_r(x) | \leq e^{c \gamma^2}.
\end{equation}
Then there exists $\alpha >0$ such that for every $\lambda >0$, every sequence $(c_r)_{r=1}^\infty \in \ell^2$ of  real numbers, 
\begin{equation*} 
p \left( \omega \, :\, | \sum_{r=1}^{\infty} c_r l_r(\omega) | > \lambda \right) \leq 2 e^{-\frac{\alpha \lambda^2}{\sum_r \, c_r^2}}. 
\end{equation*}
As a consequence there exists $C>0$ such that for every $q \geq 2$ and every $(c_r)_{r=1}^\infty \in \ell^2$, 
\begin{equation*}
\left\| \sum_{r=1}^{\infty} c_r l_r(\omega) \right\|_{L^q(\Omega)} \leq C \sqrt{q} \left( \sum_{r=1}^{\infty} c_r^2 \right)^{\frac{1}{2}}.
\end{equation*}  
\end{lemma}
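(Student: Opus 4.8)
The plan is to run the classical exponential‑moment (Chernoff) argument, which is the canonical route to such sub‑Gaussian estimates. First I would dispose of the convergence question: since $(c_r)\in\ell^2$ and each $l_r$ has mean zero with finite variance — differentiate the bound \eqref{muassum} twice at $\g=0$, or simply note that it forces $\int x^2\,d\mu_r(x)\lesssim 1$ — the partial sums $S_N(\omega)=\sum_{r=1}^N c_r l_r(\omega)$ are orthogonal increments in $L^2(\Omega)$ and hence form a Cauchy sequence there; thus $\sum_r c_r l_r(\omega)$ converges in $L^2(\Omega)$ and, by Kolmogorov's three‑series theorem, almost surely. It therefore suffices to prove the tail bound for each $S_N$ with a constant independent of $N$ and then pass to the limit with Fatou's lemma.

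For fixed $N$ and $t>0$, Markov's inequality applied to $e^{tS_N}$ gives $p(S_N>\ld)\le e^{-t\ld}\,\E\big[e^{tS_N}\big]$. By independence, $\E\big[e^{tS_N}\big]=\prod_{r=1}^N\int_{-\infty}^{\infty}e^{tc_r x}\,d\mu_r(x)$, and applying \eqref{muassum} with $\g=tc_r$ bounds each factor by $e^{c\,t^2c_r^2}$, so $\E\big[e^{tS_N}\big]\le e^{c\,t^2\sum_r c_r^2}$. Optimizing the resulting bound $e^{-t\ld+c\,t^2\sum_r c_r^2}$ in $t$ (take $t=\ld/(2c\sum_r c_r^2)$) yields $p(S_N>\ld)\le e^{-\ld^2/(4c\sum_r c_r^2)}$. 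Because \eqref{muassum} is assumed for \emph{all} real $\g$, the identical computation with $-t$ controls the lower tail $p(S_N<-\ld)$, and a union bound over the two tails gives the claimed inequality with $\al=\tfrac{1}{4c}$.

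The $L^q$ statement I would then extract by integrating the tail. Writing $S^2=\sum_r c_r^2$ and $X=\big|\sum_r c_r l_r(\omega)\big|$, the layer‑cake formula gives
\begin{equation*}
\E\big[X^q\big]=\int_0^\infty q\ld^{q-1}\,p(X>\ld)\,d\ld\le 2q\int_0^\infty \ld^{q-1}e^{-\al\ld^2/S^2}\,d\ld,
\end{equation*}
and the substitution $u=\al\ld^2/S^2$ evaluates the right‑hand side as $q\,(S/\sqrt{\al})^q\,\G(q/2)$. Taking $q$‑th roots and invoking Stirling's formula in the forms $\G(q/2)^{1/q}\lesssim\sqrt{q}$ and $q^{1/q}\lesssim 1$ produces $\|X\|_{L^q(\Omega)}\le C\sqrt{q}\,S$, as desired.

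As for difficulty: there is no real obstacle here; the argument is entirely standard. The only points needing mild care are the justification of convergence of the infinite series together with the passage to the limit in the tail bound, and the elementary Stirling bookkeeping in the last paragraph. The substantive content is simply the factorization of the moment generating function via independence combined with the hypothesis \eqref{muassum}.
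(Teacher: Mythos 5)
Your argument is correct and is essentially the standard one: the paper itself does not prove this lemma but quotes it from Burq–Tzvetkov (Lemma 3.1 in \cite{BTa}), whose proof is exactly this Chernoff/exponential-moment bound followed by integrating the tail to get the $L^q$ estimate. Your added care about convergence of the infinite series and the passage to the limit in $N$ is fine and introduces no new ideas beyond that source.
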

Burq and Tzvetkov showed in \cite{BTa} that the standard real Gaussian as well as standard Bernoulli variables satisfy 
the assumption  \eqref{muassum}.

\subsection{Our randomization setup}

We now introduce the diagonal randomization of elements of $(H^s(\T^d))^d$, which we will apply to our initial data. 

\begin{definition}\label{defrand}[Diagonal randomization of elements in $(H^s(\T^d))^d$] Let 
$(l_n(\omega))_{n\in \Z^d}$ be a sequence of of real, independent, random variables on a probability space $(\Omega,  A, p)$ as in Lemma \ref{largedev}.   
 
For $\vec f\in (H^s(\T^d))^d$, let $(\vec a^i_n), \, i=1,2,\dots,d$ be its Fourier coefficients. We introduce  the map from $(\Omega,A)$ to $(H^s(\T^d))^d$ equipped with the Borel sigma algebra, defined by 
\begin{equation}\label{map}
\omega \longrightarrow \vec f^\omega, \qquad 
\vec f^\omega(x)=\left(\sum_{n\in \Z^d}l_n(\omega)\vec a_n^1e_n(x),\dots, \sum_{n\in \Z^d}l_n(\omega)\vec a_n^d e_n(x)\right),
\end{equation}
where $e_n(x)=e^{in\cdot x}$ and call such a map randomization. 
\end{definition}

By the conditions in Lemma \ref{largedev},  the map \eqref{map} is measurable and $\vec f^\omega\in L^2(\Omega; (H^s(\T^d))^d)$, is an $(H^s(\T^d))^d$-valued random variable.
Also we remark that such a randomization does not introduce any $H^s$ regularization (see Lemma B.1 in \cite{BTa} for a proof of this fact), indeed $\|\vec f^\omega\|_{H^s}\sim
\|\vec f\|_{H^s}$. However  randomization  gives improved $L^p$ 
estimates almost surely (see Proposition \ref{prop-heatfl} below). 

\begin{remark}\label{remrand} 
Since the Leray projection \eqref{div} can be written via its coordinates  
\begin{equation}\label{div1}
(\PP\vec h)_j= h_j +\sum_{k=1,\dots,d}R_jR_k h_k,
\end{equation}
where  $\widehat{R_j(\phi)}(n)=\frac{i\,n_j}{|n|}\hat{\phi}(n), \,\, \, n\in \Z^d$, 
one can easily see that the diagonal randomization defined in \eqref{map}  commutes with the Leray projection $\PP$.
\end{remark}

Having defined diagonal randomization, we can state the main results of this paper.

\subsection{Main Results }
Using the notation from Subsection \ref{subsec-not}
we introduce the following definition:
\begin{definition}\label{weak-sol}
Let $\vec f\in(H^{-\alpha}(\T^d))^d, \, \alpha>0$, $\nabla \cdot \vec f=0, $ and of mean zero\footnote{This is assumed without loss of generality. Since the mean is conserved, if it is not zero, one can replace the solution with the solution minus the mean. This new function will satisfy an equation with a first order linear modification which does not effect the estimates. }. A weak solution of the Navier-Stokes initial value problem \eqref{NS} on $[0,T]$, is a function $\vec u\in L^2_{\text{loc}}((0,T);V)\cap L^\infty_{\text{loc}}((0,T);H)\cap C_{\text{weak}}((0,T);(H^{-\alpha}(\T^d))^d)$ satisfying 
$\dfrac{d \vec u}{dt}\in L^1_{\text{loc}}((0,T),V')$ and 
\begin{eqnarray}
&&\langle \frac{d\vec u}{dt}, \vec v \rangle + ((\vec u,\vv)) + b(\vec u, \vec u,\vv)=0 \quad  \mbox{ for a.e. } t \mbox{ and for all } \vec v \in V, \label{ns-weak}\\ 
	&& \lim_{t\rightarrow 0+}\vec u(t)=\vec f \quad \mbox{weakly in  the $(H^{-\alpha}(\T^d))^d$ topology}. \label{t-limit}
\end{eqnarray}
\end{definition}

\begin{theorem}[Existence and Uniqueness in $2$D]\label{maintheorem2}
Fix $T >0$,  $0 < \alpha < \frac{1}{2}$ and let $\vec f \in (H^{-\alpha}(\T^2))^2$, $\nabla \cdot \vec f=0 $ and of mean zero. Then there exists a set $\Sigma \subset \Omega$ of  probability $1$ such that for any 
$\omega \in \Sigma$ the initial value problem  \eqref{NS} with datum $\ff^\omega$ has a unique global weak solution $\vec u$ in the sense of Definition \ref{weak-sol} of the form 
\begin{equation}\label{solutionu2} 
\vec u \, =\,  \vec u_{\ff^\omega}  \,+   \,  \vec w 
\end{equation}
where  $\vec u_{\ff^\omega} = e^{t\Delta} \ff^{\omega}$ and 
$\vec w \in  L^{\infty}([0,T]; (L^2(\T^2))^2)\cap L^2([0,T]; (\dot H^{1}(\T^2))^2).$
\end{theorem}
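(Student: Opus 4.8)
The plan is to realize the solution in the announced form $\vec u=\vec u_{\ff^\omega}+\ww$ with $\vec u_{\ff^\omega}=e^{t\Delta}\ff^\omega$, pushing all of the nonlinear work onto the fluctuation $\ww$, which carries zero initial data and is built in the $L^2$-based energy class. First I would fix $\ff\in(H^{-\alpha}(\T^2))^2$ divergence free, of mean zero, with $0<\alpha<\tfrac12$, and invoke the heat-flow estimates of Section \ref{sec-heatfl}: Proposition \ref{prop-heatfl}, together with the Borel--Cantelli argument it feeds, produces a set $\Sigma\subset\Omega$ with $p(\Sigma)=1$ such that for every $\omega\in\Sigma$ the field $\vec z:=e^{t\Delta}\ff^\omega$ is divergence free and of mean zero (the randomization commutes with $\PP$ by Remark \ref{remrand}, and $e^{t\Delta}$ preserves both properties), lies in $L^4([0,T];(L^4(\T^2))^2)$, and is strongly continuous from $[0,T]$ into $(H^{-\alpha}(\T^2))^2$ with $\vec z(0)=\ff^\omega$. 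The restriction $\alpha<\tfrac12$ enters precisely here: almost surely $\|e^{t\Delta}\ff^\omega\|_{L^4}\lesssim t^{-\alpha/2}\|\ff^\omega\|_{W^{-\alpha,4}}$, whose fourth power $t^{-2\alpha}$ is integrable near $t=0$ exactly when $\alpha<\tfrac12$.

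Fixing $\omega\in\Sigma$ and substituting $\vec u=\vec z+\ww$ into \eqref{NS} (using $\partial_t\vec z=\Delta\vec z$) gives the difference equation
\[
\partial_t\ww=\Delta\ww-\PP\nabla\cdot(\ww\otimes\ww)-\PP\nabla\cdot(\vec z\otimes\ww+\ww\otimes\vec z)-\PP\nabla\cdot(\vec z\otimes\vec z),\qquad\ww(0)=0.
\]
I would construct a weak solution $\ww\in L^\infty([0,T];(L^2(\T^2))^2)\cap L^2([0,T];(\dot H^1(\T^2))^2)$ of this equation by the Galerkin scheme of Section \ref{sec-galerkin}, with uniform bounds on the approximants coming from the energy estimate of Section \ref{sec-estimates}. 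Testing with $\ww$, the terms $b(\ww,\ww,\ww)$ and $b(\vec z,\ww,\ww)$ vanish because $\nabla\cdot\ww=\nabla\cdot\vec z=0$; after integration by parts the surviving contributions $b(\ww,\ww,\vec z)$ and $b(\vec z,\vec z,\ww)$ are controlled, via the $2$D Ladyzhenskaya inequality $\|\ww\|_{L^4}^2\lesssim\|\ww\|_{L^2}\|\nabla\ww\|_{L^2}$ and Young's inequality, by $\tfrac12\|\nabla\ww\|_{L^2}^2+C\|\vec z\|_{L^4}^4\big(\|\ww\|_{L^2}^2+1\big)$, so that
\[
\frac{d}{dt}\|\ww\|_{L^2}^2+\|\nabla\ww\|_{L^2}^2\;\le\;C\,\|\vec z(t)\|_{L^4}^4\big(\|\ww\|_{L^2}^2+1\big).
\]
Since $\|\vec z\|_{L^4([0,T];L^4)}<\infty$ on $\Sigma$, Grönwall's inequality bounds the Galerkin approximants in $L^\infty_tL^2_x\cap L^2_t\dot H^1_x$ uniformly; Aubin--Lions compactness passes to the limit, the equation then gives $d\ww/dt\in L^1_{\text{loc}}((0,T);V')$, and near $t=0$ the integral formulation together with the equivalence Lemma \ref{equivalence} makes sense of the construction and yields $\ww(t)\to0$.

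Setting $\vec u:=\vec z+\ww$ I would verify Definition \ref{weak-sol}: on each $[\delta,T]$, $\delta>0$, parabolic smoothing puts $\vec z\in C^\infty$, so $\vec u\in L^\infty_{\text{loc}}((0,T);H)\cap L^2_{\text{loc}}((0,T);V)$, $d\vec u/dt\in L^1_{\text{loc}}((0,T);V')$, and \eqref{ns-weak} holds; weak continuity into $(H^{-\alpha}(\T^2))^2$ and \eqref{t-limit} follow from $e^{t\Delta}\ff^\omega\to\ff^\omega$ strongly in $(H^{-\alpha}(\T^2))^2$ and $\ww(t)\to0$. For uniqueness, which is where $d=2$ is essential: subtracting the difference equations for the fluctuations of two solutions of the form \eqref{solutionu2} and testing with $\vec v:=\ww^{(1)}-\ww^{(2)}$, the terms $b(\ww^{(2)},\vec v,\vec v)$ and $b(\vec z,\vec v,\vec v)$ vanish and, after Ladyzhenskaya and Young, one obtains $\tfrac{d}{dt}\|\vec v\|_{L^2}^2\lesssim\big(\|\ww^{(1)}\|_{L^2}^2\|\nabla\ww^{(1)}\|_{L^2}^2+\|\vec z\|_{L^4}^4\big)\|\vec v\|_{L^2}^2$ with $\vec v(0)=0$; the coefficient is time-integrable on $\Sigma$ (the fluctuations lie in $L^2_t\dot H^1_x$ and $\vec z\in L^4_{t,x}$), so $\vec v\equiv0$ by Grönwall — this is carried out in Section \ref{sec-uniq}. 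A short contraction argument with the mild formulation near $t=0$, justified by Lemma \ref{equivalence}, shows that any weak solution in the sense of Definition \ref{weak-sol} has its fluctuation in the energy class, so it is automatically of the form \eqref{solutionu2} and the uniqueness is unconditional.

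The hard part is the nonlinear energy estimate for $\ww$: one must absorb the genuinely non-vanishing interaction $b(\ww,\ww,\vec z)$ and the forcing $b(\vec z,\vec z,\ww)$ into the dissipation $\|\nabla\ww\|_{L^2}^2$, which works only because in two dimensions Ladyzhenskaya's inequality trades $\|\ww\|_{L^4}$ for $\|\ww\|_{L^2}^{1/2}\|\nabla\ww\|_{L^2}^{1/2}$ and because, almost surely, the randomized heat flow $e^{t\Delta}\ff^\omega$ belongs to $L^4_{t,x}$ up to the borderline exponent $\alpha<\tfrac12$ — a gain that is unavailable deterministically since $\ff^\omega\notin L^2$. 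The secondary delicate point is controlling the fluctuation near $t=0$, so that the datum is attained weakly in $(H^{-\alpha}(\T^2))^2$ and the decomposition \eqref{solutionu2} is forced.
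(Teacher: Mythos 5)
Your overall architecture coincides with the paper's: the same decomposition $\vec u=e^{t\Delta}\ff^\omega+\ww$, the a.s.\ $L^4_{t,x}$ gain for the randomized heat flow with the threshold $\alpha<\tfrac12$, a Galerkin construction for the difference equation, and a Gr\"onwall-type uniqueness argument as in Section \ref{sec-uniq}. Where you genuinely diverge is in the key energy estimate and in how the probabilistic input is packaged. You close the energy bound by testing the difference equation directly with $\ww$, killing $b(\ww,\ww,\ww)$ and $b(\vec z,\ww,\ww)$, and absorbing $b(\ww,\ww,\vec z)$ and $b(\vec z,\vec z,\ww)$ into the dissipation via Ladyzhenskaya and Young, so that a single differential inequality with coefficient $\|\vec z(t)\|_{L^4_x}^4\in L^1_t$ runs Gr\"onwall on all of $[0,T]$. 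The paper instead splits time: near $t=0$ it passes to the mild formulation through the Equivalence Lemma \ref{equivalence} and uses the $L^2$-maximal-regularity estimates of \cite{LR} together with a continuity argument in the weighted norms $\|t^\gamma\vec g\|_{L^4_tL^4_x}$ (this is why $\gamma<0$ and the constraint $\alpha<\tfrac12+2\gamma$ appear), while away from zero it uses the pointwise-in-time bounds of Lemma \ref{lemma-deterministic} on $\|\vec g\|_{L^2}$ and $\|\nabla\vec g\|_{L^\infty}$, which are integrable only on $[\delta^*,T]$. Your route is more elementary and is sufficient in $d=2$; the paper's heavier two-regime scheme is designed so that the same framework also covers $d=3$, where the direct $L^2$ Gr\"onwall argument does not close. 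Similarly, you obtain $\vec z\in L^4_{t,x}$ from the deterministic smoothing $W^{-\alpha,4}\to L^4$ plus a.s.\ membership $\ff^\omega\in W^{-\alpha,4}$, whereas the paper estimates the weighted space-time norm of the heat flow directly (Proposition \ref{prop-heatfl}) with exponential tail bounds; the latter is what lets it build $\Sigma$ as an increasing union of explicit sets $E_{\lambda_j}^c$, but your version feeds the same kind of limiting argument, so this is a cosmetic difference.

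One caveat: your closing claim that a ``short contraction argument with the mild formulation near $t=0$'' shows that \emph{every} weak solution in the sense of Definition \ref{weak-sol} automatically has its fluctuation in the energy class, so that uniqueness is ``unconditional,'' goes beyond both the theorem and the paper, and is not substantiated. With data only in $H^{-\alpha}$ and a competitor solution known only to lie in the Leray-type class of Definition \ref{weak-sol}, there is no evident contraction space in which such a solution is forced to agree with the mild one; the paper's Theorem \ref{lemma-uniq} (and the theorem as stated) asserts uniqueness only within the class of solutions of the form \eqref{solutionu2} with $\ww\in L^\infty_tL^2_x\cap L^2_t\dot H^1_x$. Dropping that aside, your argument proves the stated result.
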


\medskip

\begin{theorem}[Existence in $3$D]\label{maintheorem3}
Fix $T >0$,  $0 < \alpha < \frac{1}{4}$ and let $\vec f \in (H^{-\alpha}(\T^3))^3$, $\nabla \cdot \vec f=0, $ and of mean zero. Then there exists a set $\Sigma \subset \Omega$ of  probability $1$ such that for any $\omega \in \Sigma$ the initial value problem  \eqref{NS} with datum $\ff^\omega$ has a global weak solution $\vec u$ in the sense of Definition \ref{weak-sol} of the form 
\begin{equation}\label{solutionu3} 
\vec u \, =\,  \vec u_{\ff^\omega}  \,+   \,  \vec w, 
\end{equation}
where  $\vec u_{\ff^\omega} = e^{t\Delta} \ff^{\omega}$ and 
$\vec w \in  L^{\infty}([0,T]; (L^2(\T^3))^3)\cap L^2([0,T]; (\dot H^{1}(\T^3))^3).$

\end{theorem}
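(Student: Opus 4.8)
The strategy is to adapt the Leray--Hopf construction to the equation satisfied by $\vec w=\vec u-\vec u_{\vec f^\omega}$, exploiting the almost sure $L^p$--smoothing of the heat flow on randomized data. By Definition~\ref{defrand} and Remark~\ref{remrand}, $\vec f^\omega$ is again divergence free and mean zero, and $\vec u_{\vec f^\omega}=e^{t\Delta}\vec f^\omega$ solves the linear heat equation with that datum. Let $\Sigma\subset\Omega$ be the full--probability set provided by the estimates of Section~\ref{sec-heatfl} (Proposition~\ref{prop-heatfl}), on which $\vec u_{\vec f^\omega}\in L^p((0,T);(L^r(\T^3))^3)$ for every finite $p,r$ with $p\alpha<1$, with $\|\vec u_{\vec f^\omega}(t)\|_{L^r}\lesssim t^{-\alpha}$ pathwise; since $\alpha<\tfrac14$ we may in particular fix exponents so that $\vec u_{\vec f^\omega}\otimes\vec u_{\vec f^\omega}\in L^2((0,T);L^2(\T^3))$ and $\vec u_{\vec f^\omega}\in L^4((0,T);(L^r(\T^3))^3)$ with $r$ as large as we please. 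Everything below is carried out pathwise for $\omega\in\Sigma$.

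Inserting $\vec u=\vec u_{\vec f^\omega}+\vec w$ into \eqref{NS} and using $\partial_t\vec u_{\vec f^\omega}=\Delta\vec u_{\vec f^\omega}$ shows that $\vec w$ must solve the difference equation
\begin{equation*}
\partial_t\vec w-\Delta\vec w=-\PP\nabla\cdot\bigl(\vec w\otimes\vec w+\vec w\otimes\vec u_{\vec f^\omega}+\vec u_{\vec f^\omega}\otimes\vec w+\vec u_{\vec f^\omega}\otimes\vec u_{\vec f^\omega}\bigr),\qquad \vec w(0)=0.
\end{equation*}
By the equivalence Lemma~\ref{equivalence} a function in the energy class solves this Cauchy problem weakly if and only if it solves the corresponding integral equation. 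I would first use the mild formulation on a short interval $[0,t_0]$: contract the map $\vec w\mapsto-\int_0^t e^{(t-s)\Delta}\PP\nabla\cdot\bigl((\vec u_{\vec f^\omega}+\vec w)\otimes(\vec u_{\vec f^\omega}+\vec w)\bigr)(s)\,ds$ in a space such as $C([0,t_0];(L^2(\T^3))^3)\cap L^2((0,t_0);(\dot H^1(\T^3))^3)$. The only contribution not made small by shrinking $t_0$ through the smallness of $\|\vec w\|$ is the purely linear forcing $\PP\nabla\cdot(\vec u_{\vec f^\omega}\otimes\vec u_{\vec f^\omega})$, whose contribution to the $L^2$ norm is bounded by $\int_0^t(t-s)^{-1/2}\|\vec u_{\vec f^\omega}(s)\|_{L^4}^2\,ds\lesssim\int_0^t(t-s)^{-1/2}s^{-2\alpha}\,ds\lesssim t^{1/2-2\alpha}$, which is finite and tends to $0$ as $t_0\to0$ precisely because $\alpha<\tfrac14$. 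This yields $\vec w$ on $[0,t_0]$ in the energy class with $\vec w\in C([0,t_0];(L^2)^3)$ and $\vec w(t)\to0$ in $(L^2(\T^3))^3$ as $t\to0^+$; combined with the strong continuity of $e^{t\Delta}$ on $(H^{-\alpha}(\T^3))^3$ this gives the initial condition \eqref{t-limit} for $\vec u=\vec u_{\vec f^\omega}+\vec w$.

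On $[t_0,T]$ I would construct $\vec w$ by a Galerkin scheme in the spirit of \cite{CF,LR}: project onto the first $N$ divergence--free Fourier modes, solve the resulting finite ODE system with datum $P_N\vec w(t_0)$, and derive a priori bounds by testing with the approximation $\vec w_N$. Since $b(\vec a,\vec b,\vec b)=0$ whenever $\nabla\cdot\vec a=0$, the terms $b(\vec w_N,\vec w_N,\vec w_N)$ and $b(\vec u_{\vec f^\omega},\vec w_N,\vec w_N)$ drop out, leaving (after using the antisymmetry $b(\vec a,\vec b,\vec c)=-b(\vec a,\vec c,\vec b)$ for $\nabla\cdot\vec a=0$) the two terms $b(\vec w_N,\vec u_{\vec f^\omega},\vec w_N)$ and $b(\vec u_{\vec f^\omega},\vec u_{\vec f^\omega},\vec w_N)$. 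Integrating by parts, applying Hölder, the $3$D Gagliardo--Nirenberg inequality $\|\vec w_N\|_{L^{p'}}\lesssim\|\vec w_N\|_{L^2}^{1-\theta}\|\nabla\vec w_N\|_{L^2}^{\theta}$ with $\theta=3/r$ small, and Young's inequality to absorb the dissipation, one arrives at
\begin{equation*}
\tfrac12\tfrac{d}{dt}\|\vec w_N\|_{L^2}^2+\tfrac12\|\nabla\vec w_N\|_{L^2}^2\le C\,\|\vec w_N\|_{L^2}^2\,\|\vec u_{\vec f^\omega}\|_{L^r}^{\frac{2}{1-3/r}}+C\,\|\vec u_{\vec f^\omega}\|_{L^4}^4.
\end{equation*}
Because $\alpha<\tfrac14$, both $t\mapsto\|\vec u_{\vec f^\omega}(t)\|_{L^r}^{2/(1-3/r)}$ (for $r$ large) and $t\mapsto\|\vec u_{\vec f^\omega}(t)\|_{L^4}^4$ lie in $L^1(0,T)$ on $\Sigma$, so Grönwall's inequality gives bounds for $\vec w_N$ in $L^\infty((t_0,T);(L^2)^3)\cap L^2((t_0,T);(\dot H^1)^3)$ uniform in $N$, whence the equation gives a uniform bound for $\partial_t\vec w_N$ in $L^1((t_0,T);V')$. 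Aubin--Lions compactness then lets us pass to the limit $N\to\infty$, producing a weak solution $\vec w$ of the difference equation on $[t_0,T]$ in the energy class, with the weak continuity into $(H^{-\alpha}(\T^3))^3$ and $\tfrac{d\vec w}{dt}\in L^1_{\mathrm{loc}}(V')$ read off from the limiting equation. Gluing this with the short--time solution at $t_0$ (they agree there by uniqueness of the mild solution) produces $\vec w\in L^\infty([0,T];(L^2(\T^3))^3)\cap L^2([0,T];(\dot H^1(\T^3))^3)$; since $\partial_t\vec u_{\vec f^\omega}=\Delta e^{t\Delta}\vec f^\omega$ is smooth away from $t=0$ and lies in $L^1_{\mathrm{loc}}((0,T);V')$ near $t=0$, the pair $\vec u=\vec u_{\vec f^\omega}+\vec w$ meets all the requirements of Definition~\ref{weak-sol}.

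I expect the main obstacle to be the energy estimate in three dimensions, and specifically the behaviour at $t=0$. Unlike the critical $2$D case, the term $b(\vec w,\vec u_{\vec f^\omega},\vec w)$ cannot be controlled with $\vec w$ in the energy space alone: one must spend a fraction of $\|\nabla\vec w\|_{L^2}^2$ through Gagliardo--Nirenberg, and likewise the self--interaction $b(\vec u_{\vec f^\omega},\vec u_{\vec f^\omega},\vec w)$ forces $\vec u_{\vec f^\omega}\otimes\vec u_{\vec f^\omega}\in L^2_tL^2_x$, i.e. $\int_0^T\|\vec u_{\vec f^\omega}(t)\|_{L^4}^4\,dt<\infty$. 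Since $\|\vec u_{\vec f^\omega}(t)\|_{L^4}\sim t^{-\alpha}$ almost surely, these are exactly the constraints $4\alpha<1$ behind the restriction $\alpha<\tfrac14$; here it is essential that randomization upgrades the deterministic bound $\|e^{t\Delta}\vec f\|_{L^4(\T^3)}\lesssim t^{-3/8-\alpha/2}\|\vec f\|_{H^{-\alpha}}$ (far too weak, since $3/2+2\alpha>1$) to the dimension--independent $t^{-\alpha}$ of Proposition~\ref{prop-heatfl}. A secondary technical point is matching the two constructions at $t_0$ and verifying the weak--continuity statements of Definition~\ref{weak-sol} through the Galerkin limit.
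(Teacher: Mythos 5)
Your overall architecture --- randomize the data, peel off the heat flow $\vec u_{\ff^\omega}=e^{t\Delta}\ff^\omega$, and control the difference $\vec w$ in the energy space using the almost sure improved integrability of the free evolution --- is the paper's. But the step you propose on $[0,t_0]$ fails in $d=3$, and it is precisely the point where the paper has to do something extra. You want to contract the map $\vec w\mapsto -\int_0^t e^{(t-s)\Delta}\PP\nabla\cdot\bigl((\vec u_{\ff^\omega}+\vec w)\otimes(\vec u_{\ff^\omega}+\vec w)\bigr)\,ds$ in $C([0,t_0];L^2)\cap L^2((0,t_0);\dot H^1)$ and claim that the only term not made small by shrinking $t_0$ is the forcing $\vec u_{\ff^\omega}\otimes\vec u_{\ff^\omega}$. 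In three dimensions the self-interaction $\vec w\otimes\vec w$ is not even bounded on that space: the natural estimate $\|\vec w(t)\|_{L^2}+\|\vec w\|_{L^2_t\dot H^1}\lesssim\|\vec w\otimes\vec w\|_{L^2_tL^2_x}=\|\vec w\|_{L^4_tL^4_x}^2$ closes in $d=2$ because $\|\vec w\|_{L^4_x}\lesssim\|\vec w\|_{L^2}^{1/2}\|\nabla\vec w\|_{L^2}^{1/2}$, but in $d=3$ Gagliardo--Nirenberg gives $\|\vec w\|_{L^4_x}\lesssim\|\vec w\|_{L^2}^{1/4}\|\nabla\vec w\|_{L^2}^{3/4}$, so $\|\vec w\|_{L^4_tL^4_x}^4$ involves $\int_0^{t_0}\|\nabla\vec w\|_{L^2}^3\,dt$, which the energy does not control; this is the supercriticality of the energy space in 3D and no smallness of $t_0$ repairs it. The paper closes the near-zero estimate (Theorem \ref{th-dns}, $d=3$) only by augmenting the energy to $E_{\frac12}(\vec w)=E(\vec w)+E((-\Delta)^{\frac14}\vec w)$, i.e.\ working at the critical $\dot H^{1/2}$ level as in Theorem 15.2 of \cite{LR}; this is why the $d=3$ hypotheses \eqref{L4L6} involve $(-\Delta)^{\frac14}\vec g$ in $L^2_tL^6_x$ and $L^{8/3}_tL^{8/3}_x$ and $\vec g$ in $L^8_tL^8_x$, and it is these norms, through the constraint $(\sigma+\alpha-2\gamma)q<2$ of Proposition \ref{prop-heatfl}, that force $\alpha<\tfrac14$.

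Two further points. First, Proposition \ref{prop-heatfl} gives almost sure \emph{space-time} bounds $\|t^\gamma(-\Delta)^{\sigma/2}e^{t\Delta}\ff^\omega\|_{L^q_tL^p_x}<\infty$, not the pathwise pointwise-in-time bound $\|\vec u_{\ff^\omega}(t)\|_{L^r}\lesssim t^{-\alpha}$ you invoke; the only pointwise-in-time bounds available are the deterministic ones of Lemma \ref{lemma-deterministic}, and the correct heuristic decay of $\|e^{t\Delta}\ff^\omega\|_{L^p_x}$ is $t^{-\alpha/2}$ (the $L^2_x$ rate), not $t^{-\alpha}$. Consequently your accounting for the threshold is off: $\int_0^T\|\vec g\|_{L^4}^4\,dt<\infty$ requires only $\alpha<\tfrac12$ (the $d=2$ threshold), and the genuine source of $\alpha<\tfrac14$ is the half-derivative and $L^8_tL^8_x$ norms needed for $E_{\frac12}$. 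Second, splitting the construction into a mild solution on $[0,t_0]$ and a Galerkin solution on $[t_0,T]$ forces a matching at $t_0$ that you justify by ``uniqueness of the mild solution,'' which is not available for 3D weak solutions in the energy class; the paper instead runs a single Galerkin scheme on all of $[0,T]$ with zero data and uses the two-regime argument only as an a priori estimate on the approximants, so no gluing is needed. Your treatment of $[t_0,T]$ itself (testing with $\vec w_N$, exploiting $b(\vec a,\vec b,\vec b)=0$, absorbing via Young, Gr\"onwall) matches the paper's ``$t\in[\delta^*,T]$'' case and is fine.
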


\section{The heat flow on randomized data} \label{sec-heatfl} 

In this section we obtain certain a-priori estimates on the free evolution of the randomized data.
These bounds will play an important role in the proof of existence ($d=2, 3$) and uniqueness ($d=2$) in the later sections.  

\subsection{Deterministic estimates}  

\begin{lemma} \label{lemma-deterministic}
Let $0 < \alpha < 1$, $k$ a nonnegative integer and let  $\vec u_{\ff^\omega} = e^{t\Delta} \ff^{\omega}$. 
If $\ff^{\omega} \in (H^{-\alpha}(\T^d))^d$ then we have: 

\begin{eqnarray}\ 
\| \nabla^k \vec u_{\ff^\omega}  (\cdot, t) \|_{L^2_x} &\lesssim& (1 + t^{ -\frac{\alpha + k}{2} } ) \; \|\ff\|_{H^{-\alpha}}.\label{l2gradient}\\
\| \nabla^{k} \vec u_{\ff^\omega} \|_{L^{\infty}_x} &\lesssim& \left( \max \{ t^{-1}, t^{-(k+\alpha+\frac{d}{2})} \} \right)^{\frac{1}{2}} \; \|\ff\|_{H^{-\alpha}}. \label{linftygradient}
\end{eqnarray}
\end{lemma}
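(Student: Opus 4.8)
The plan is to work entirely on the Fourier side. Write $\ff^\omega=\sum_{n\in\Z^d}\vec b_n\,e_n$, where $\vec b_n=(l_n(\omega)\vec a_n^1,\dots,l_n(\omega)\vec a_n^d)$ are the Fourier coefficients of $\ff^\omega$, so that $\vec u_{\ff^\omega}(\cdot,t)=\sum_{n}e^{-t|n|^2}\vec b_n\,e_n$ and, since $\ff$ has mean zero, $\vec b_0=0$. I will prove both bounds with $\|\ff^\omega\|_{H^{-\alpha}}$ on the right-hand side and then use the comparability $\|\ff^\omega\|_{H^{-\alpha}}\sim\|\ff\|_{H^{-\alpha}}$ recorded after Definition \ref{defrand}. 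The only elementary input is the one-variable estimate $\sup_{\xi\ge 0}\xi^{a}e^{-t\xi^{2}}\lesssim t^{-a/2}$ for $a>0$, equivalently $\xi^{a}e^{-2t\xi^{2}}\lesssim t^{-a/2}e^{-t\xi^{2}}$, together with $\jb n^{2\alpha}\le 1+|n|^{2\alpha}$, which holds since $0<\alpha<1$.

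\emph{Proof of \eqref{l2gradient}.} By Plancherel,
\[
\|\nabla^{k}\vec u_{\ff^\omega}(\cdot,t)\|_{L^{2}_{x}}^{2}\ \sim\ \sum_{n\in\Z^d}|n|^{2k}e^{-2t|n|^{2}}|\vec b_n|^{2}\ \le\ \Big(\sup_{n\in\Z^d}|n|^{2k}\jb n^{2\alpha}e^{-2t|n|^{2}}\Big)\sum_{n\in\Z^d}\jb n^{-2\alpha}|\vec b_n|^{2}.
\]
The last sum is $\sim\|\ff^\omega\|_{H^{-\alpha}}^{2}$, and by the one-variable estimate (applied with $a=2k$ and $a=2k+2\alpha$) the supremum is $\lesssim t^{-k}+t^{-(k+\alpha)}\lesssim 1+t^{-(k+\alpha)}\le(1+t^{-(k+\alpha)/2})^{2}$. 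This gives \eqref{l2gradient}.

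\emph{Proof of \eqref{linftygradient}.} Bounding the sup-norm of a Fourier series by the $\ell^{1}$-norm of its coefficients and applying Cauchy--Schwarz against the weight $\jb n^{\alpha}$ gives, for every $t>0$,
\[
\|\nabla^{k}\vec u_{\ff^\omega}(\cdot,t)\|_{L^{\infty}_{x}}\ \lesssim\ \sum_{n}|n|^{k}e^{-t|n|^{2}}|\vec b_n|\ \le\ \Big(\sum_{n\in\Z^d}|n|^{2k}\jb n^{2\alpha}e^{-2t|n|^{2}}\Big)^{1/2}\|\ff^\omega\|_{H^{-\alpha}}.
\]
For $0<t\le1$ I bound the remaining sum using the one-variable estimate as above, $\sum_{n}|n|^{2k}\jb n^{2\alpha}e^{-2t|n|^{2}}\lesssim(t^{-k}+t^{-(k+\alpha)})\sum_{n\in\Z^d}e^{-t|n|^{2}}$, and then $\sum_{n\in\Z^d}e^{-t|n|^{2}}=\big(\sum_{m\in\Z}e^{-tm^{2}}\big)^{d}\lesssim t^{-d/2}$ by comparison with $\int_{\R}e^{-t\xi^{2}}\,d\xi$; the total is $\lesssim t^{-(k+\alpha+d/2)}$, and since $d\ge2$ forces $k+\alpha+d/2\ge1$, its square root equals $\big(\max\{t^{-1},t^{-(k+\alpha+d/2)}\}\big)^{1/2}$ on $(0,1]$. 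For $t\ge1$ I use $\vec b_0=0$ together with $e^{-2t|n|^{2}}\le e^{-t}e^{-|n|^{2}}$ for all $n$ with $|n|\ge1$ and $t\ge1$, so $\sum_{n\ne0}|n|^{2k}\jb n^{2\alpha}e^{-2t|n|^{2}}\le e^{-t}\sum_{n}|n|^{2k}\jb n^{2\alpha}e^{-|n|^{2}}\lesssim e^{-t}$; hence the left-hand side is $\lesssim e^{-t/2}\|\ff^\omega\|_{H^{-\alpha}}\lesssim t^{-1/2}\|\ff^\omega\|_{H^{-\alpha}}$, which matches $\big(\max\{t^{-1},t^{-(k+\alpha+d/2)}\}\big)^{1/2}\|\ff^\omega\|_{H^{-\alpha}}$ on $[1,\infty)$. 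Combining the two ranges of $t$ gives \eqref{linftygradient}.

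Everything here is routine once the Fourier-side setup is in place. The only two points deserving a little attention are the comparison of the lattice sum $\sum_{n\in\Z^d}e^{-t|n|^{2}}$ with the Gaussian integral for small $t$, and --- more importantly --- the use of the mean-zero hypothesis on $\ff$ (equivalently $\vec b_0=0$), which is precisely what produces genuine decay of $\|\nabla^{k}\vec u_{\ff^\omega}\|_{L^{\infty}_{x}}$ as $t\to\infty$ and hence the factor $t^{-1}$ appearing inside the maximum in \eqref{linftygradient}; without it the $k=0$ case would already fail.
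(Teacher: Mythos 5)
Your proof is correct, and for the first bound it is essentially the paper's argument (Plancherel plus $\sup_n |n|^{2k}\jb{n}^{2\alpha}e^{-2t|n|^2}\lesssim 1+t^{-(k+\alpha)}$). For the $L^\infty$ bound the route differs after the common first step (sup norm by $\ell^1$ of coefficients, then Cauchy--Schwarz against the weight $\jb{n}^{\alpha}$): the paper compares the resulting lattice sum with the radial integral $I=\int_0^\infty(1+\rho^2)^{\alpha}\rho^{2k}e^{-2\rho^2 t}\rho^{d-1}\,d\rho$, substitutes $z=\sqrt{t}\,\rho$, and splits the integral at $|z|=\sqrt{t}$ to get $I\lesssim\max\{t^{-1},t^{-(k+\alpha+\frac d2)}\}$, whereas you estimate the lattice sum directly, splitting in time: for $t\le 1$ via $\xi^{a}e^{-2t\xi^2}\lesssim t^{-a/2}e^{-t\xi^2}$ and $\sum_{n\in\Z^d}e^{-t|n|^2}\lesssim t^{-d/2}$, and for $t\ge 1$ via the mean-zero condition $\widehat{\ff^\omega}(0)=0$ and exponential decay. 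Your treatment has the merit of making explicit a point the paper leaves implicit: for $k=0$ the lattice sum contains the constant $n=0$ term, so it does not decay as $t\to\infty$ and cannot be majorized by the decaying integral $I$ unless the zero mode is discarded; the mean-zero hypothesis on $\ff$ (standing throughout the paper, and preserved by the diagonal randomization) is exactly what licenses this and produces the $t^{-1/2}$ decay at large times. The paper's integral computation, on the other hand, gives the two regimes in one stroke without separating small and large $t$. Both arguments yield the stated constants, and your use of $\|\ff^{\omega}\|_{H^{-\alpha}}\sim\|\ff\|_{H^{-\alpha}}$ is consistent with how the paper itself passes from $\widehat{\ff^{\omega}}$ to $\|\ff\|_{H^{-\alpha}}$ in its Cauchy--Schwarz step.
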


\begin{proof} To prove \eqref{l2gradient} we write  $\widehat{\uu}_{\ff^{\omega}}(n, t) = e^{-|n|^2 t} \widehat{\ff^{\omega}}(n)$. Then we have that: 
\begin{align*} 
	\|\nabla^k \uu_{\ff^{\omega}}  (\cdot, t) \|_{L^2_x} 
	& \sim \|  e^{-|n|^2 t} |n|^k \widehat{\ff^{\omega}}(n) \|_{l^2_n} \\
	& \lesssim  (1 + t^{ -\frac{\alpha +k }{2} } ) \; \|\ff\|_{H^{-\alpha}}, 
\end{align*} 
where to obtain the last line we used that $e^{-|n|^2 t} t^{\frac{\alpha+k}{2}}   |n|^{\alpha+k} \leq  C$.  

To prove \eqref{linftygradient} using the Fourier representation $\nabla^{k} \uu_{\ff^{\omega}}(x,t)$ we have 
\begin{align} 
	| \nabla^{k} \uu_{\ff^{\omega}} (x,t) | 
	& \leq  \sum_n  |n|^k e^{-|n|^2t} \langle n \rangle^{\alpha}  \langle n \rangle^{-\alpha}  |\widehat{\ff^{\omega}}(n) | \nonumber \\
	& \leq \|\ff\|_{H^{-\alpha}} \left( \sum_{n } \langle n \rangle^{2\alpha} |n|^{2k} e^{-2|n|^2t} \right)^{\frac{1}{2}} \nonumber \\   
	& \leq  I^{\frac{1}{2}} \; \|\ff\|_{H^{-\alpha}}, \label{lemma-det-b-introI} 
\end{align} 
where 
$$ I : = \int_{0}^{\infty} (1+\rho^2)^{\alpha} \rho^{2k} e^{-2{\rho}^2 t} \rho^{d-1} \, d\rho.$$  
In order to calculate the integral $I$ we perform the change of variables $z = \sqrt{t} \rho$ and obtain: 
\begin{equation} 
	I 
	= \int_{0}^{\infty} (1+\frac{z^2}{t})^{\alpha} \; (\frac{z}{\sqrt{t}})^{2k} \; e^{-2z^2} z^{d-1} t^{-\frac{d}{2}} \, dz \nonumber \\ 
	= I_1 + I_2, \label{lemma-det-introI12}
\end{equation}
where 
\begin{align*} 
	I_1 & =  \int_{|z| \leq \sqrt{t}} (1+\frac{z^2}{t})^{\alpha} \; (\frac{z}{\sqrt{t}})^{2k} \; e^{-2z^2} z^{d-1} t^{-\frac{d}{2}} \, dz, \\
	I_2 & =  \int_{|z| > \sqrt{t}} (1+\frac{z^2}{t})^{\alpha} \; (\frac{z}{\sqrt{t}})^{2k} \; e^{-2z^2} z^{d-1} t^{-\frac{d}{2}} \, dz. 
\end{align*}
We bound $I_1$ from above as follows:
\begin{align} 
	I_1 
	& \lesssim t^{-\frac{d}{2}} \; t^{\frac{d-2}{2}} \; \int_0^{\sqrt{t}} e^{-2z^2} z \, dz \nonumber \\ 
	& \lesssim t^{-1} (e^{-2t} - 1). \label{lemma-det-I1}  
\end{align} 
On the other hand, we bound $I_2$ from above as: 
\begin{align} 
	I_2 
	& \lesssim t^{-\frac{d}{2}} \; \int_{|z| > \sqrt{t}} \frac{ z^{2(\alpha+k)+d-1} }{ {(\sqrt{t}})^{2(\alpha+k)} } e^{-2z^2} \, dz \nonumber \\ 
	& = t^{-(k+\alpha + \frac{d}{2})} \; \int_{|z| > \sqrt{t} } z^{2(\alpha+k)+d-1}  e^{-2z^2} \, dz \nonumber \\
	& \lesssim t^{-(k+\alpha + \frac{d}{2})}. \label{lemma-det-I2}  
\end{align} 
By combining \eqref{lemma-det-I1} and \eqref{lemma-det-I2} we obtain that 
$$ 
	I \lesssim \max\{ t^{-1},  t^{-(k+\alpha + \frac{d}{2})} \}, 
$$ 
which thanks to \eqref{lemma-det-b-introI} and \eqref{lemma-det-introI12} implies the claim \eqref{linftygradient}. 
\end{proof}

\subsection{Probabilistic estimates}

\medskip
\begin{proposition} \label{prop-heatfl}
Let $T>0$ and  $ \alpha \geq 0$.  Let  $r\geq p \geq q \geq 2$,  $\sigma \ge 0$ and $\gamma \in \mathbb R$ be such that
$(\sigma + \alpha - 2\gamma) q < 2$.  Then 
there exists $C_{T}>0$ such that for every $\ff \in (H^{-\alpha}(\T^d))^d$ 
\begin{equation}\label{lp}
\|t^{\gamma} (-\Delta)^{\frac{\sigma}{2}} e^{t\Delta}\ff^\omega\|_{L^r(\Omega; L^q([0,T]; L^p_x) }
\leq C_{T} \,  \|\ff\|_{H^{-\alpha}},
\end{equation}
where $C_T$ may depend also on $p, q, r, \sigma, \gamma$ and $\alpha$. 
 
Moreover, if we set
\begin{equation}\label{setE}
E_{\lambda,T,\ff,\sigma,p}=\{\omega \in \Omega \, :\, \|t^{\gamma} (-\Delta)^{\frac{\sigma}{2}} e^{t\Delta}\ff^\omega\|_{L^q ([0,T]; L^p_x) }\geq \lambda \},
\end{equation}
then there exists $c_1, c_2>0$ such that for every $\lambda>0$ and for every $\ff\in (H^{-\alpha}(\T^d))^d$
\begin{equation}\label{probE}
P(E_{\lambda,T,\ff,\sigma,p})\leq c_1 \exp\left[{-c_2 \frac{\lambda^2}{C_T \|\ff\|^2_{H^{-\alpha}}}}\right].
\end{equation}
\end{proposition}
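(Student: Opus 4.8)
The plan is to derive both assertions from the deterministic heat-flow estimate \eqref{l2gradient} of Lemma \ref{lemma-deterministic} together with the large deviation estimate of Lemma \ref{largedev}, via the standard Minkowski/Khinchin route. First I would fix $\ff \in (H^{-\alpha}(\T^d))^d$ with Fourier coefficients $(\vec a_n^i)$ and write, componentwise,
\[
\big(t^\gamma (-\Delta)^{\sigma/2} e^{t\Delta}\ff^\omega\big)_i(x) = t^\gamma \sum_{n\in\Z^d} l_n(\omega)\, |n|^\sigma e^{-|n|^2 t}\, \vec a_n^i\, e_n(x).
\]
Since $r \geq p$, Minkowski's integral inequality lets me pull the $L^r(\Omega)$ norm inside the spatial $L^p_x$ norm and the temporal $L^q$ norm: it suffices to bound $\|t^\gamma \sum_n l_n(\omega) |n|^\sigma e^{-|n|^2 t} \vec a_n^i e_n(x)\|_{L^r(\Omega)}$ for fixed $(x,t)$. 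By the second conclusion of Lemma \ref{largedev} (with $r$ in place of $q$ there), this $L^r(\Omega)$ norm is $\lesssim \sqrt{r}\, t^\gamma \big(\sum_n |n|^{2\sigma} e^{-2|n|^2 t} |\vec a_n^i|^2\big)^{1/2}$. Thus
\[
\|t^\gamma (-\Delta)^{\sigma/2} e^{t\Delta}\ff^\omega\|_{L^r(\Omega; L^q_t L^p_x)} \lesssim \sqrt r \,\Big\| t^\gamma \big(\textstyle\sum_n |n|^{2\sigma} e^{-2|n|^2 t}|\widehat{\ff}(n)|^2\big)^{1/2} \Big\|_{L^q_t L^p_x}.
\]
Because the inner quantity is independent of $x$, the $L^p_x$ norm over $\T^d$ contributes only a constant, and what remains is a purely deterministic $L^q_t$ bound.

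Next I would control that deterministic piece. Inserting $\langle n\rangle^\alpha \langle n\rangle^{-\alpha}$ and pulling out $\|\ff\|_{H^{-\alpha}}$, it reduces to showing
\[
\Big\| t^\gamma \big(\textstyle\sum_n \langle n\rangle^{2\alpha} |n|^{2\sigma} e^{-2|n|^2 t}\big)^{1/2}\Big\|_{L^q([0,T])} \leq C_T.
\]
By the same integral computation as in the proof of \eqref{linftygradient} — replacing the sum by $\int_0^\infty (1+\rho^2)^\alpha \rho^{2\sigma} e^{-2\rho^2 t}\rho^{d-1}\,d\rho$ and rescaling $z=\sqrt t\,\rho$ — one gets this sum is $\lesssim \max\{t^{-d/2},\, t^{-(\sigma+\alpha+d/2)}\}$; actually for the $L^q_t$ integrability near $t=0$ the relevant bound, after also using \eqref{l2gradient}-type reasoning (i.e. $e^{-|n|^2 t}|n|^{\sigma+\alpha}t^{(\sigma+\alpha)/2}\lesssim 1$ combined with $\sum_n e^{-|n|^2 t}\lesssim t^{-d/2}$ for small $t$ and $\lesssim 1$ for $t\gtrsim 1$), is that the square-root factor is $\lesssim (1 + t^{-(\sigma+\alpha)/2})$ times an $L^2_n$ sum — more carefully one obtains $t^{2\gamma}\cdot t^{-(\sigma+\alpha)}$ as the singular behavior near $0$, so the $L^q_t$ integral near zero is $\int_0^1 t^{(2\gamma - \sigma-\alpha)q/2}\,dt$, which is finite precisely when $(\sigma+\alpha-2\gamma)q < 2$ — exactly the hypothesis. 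Away from $t=0$ and up to $T$ everything is bounded, producing a constant $C_T$ depending on $T,p,q,r,\sigma,\gamma,\alpha$. This establishes \eqref{lp}.

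For \eqref{probE} I would use the Chebyshev/Markov argument that converts $L^r(\Omega)$ bounds with $\sqrt r$ growth into Gaussian tail bounds. Writing $F(\omega) := \|t^\gamma (-\Delta)^{\sigma/2} e^{t\Delta}\ff^\omega\|_{L^q_t L^p_x}$, estimate \eqref{lp} gives $\|F\|_{L^r(\Omega)} \leq C\sqrt r\, \sqrt{C_T}\,\|\ff\|_{H^{-\alpha}}$ for all finite $r \geq p_0 := \max(p,q,2)$ (and for $2\le r\le p_0$ by monotonicity of $L^r(\Omega)$ norms on a probability space, up to adjusting constants). Then by Markov's inequality, for any $r$,
\[
P(E_{\lambda,T,\ff,\sigma,p}) = P(F > \lambda) \leq \lambda^{-r}\,\|F\|_{L^r(\Omega)}^r \leq \Big(\frac{C\sqrt r\,\sqrt{C_T}\,\|\ff\|_{H^{-\alpha}}}{\lambda}\Big)^r.
\]
Optimizing over $r$ — choosing $r \sim \lambda^2/(e^2 C^2 C_T \|\ff\|_{H^{-\alpha}}^2)$ when this is $\geq p_0$, and noting the bound is trivial otherwise since one can absorb the small-$\lambda$ range into the constant $c_1$ — yields $P(E_{\lambda,T,\ff,\sigma,p}) \leq c_1 \exp[-c_2 \lambda^2/(C_T\|\ff\|_{H^{-\alpha}}^2)]$, as claimed.

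I expect the main obstacle to be the careful bookkeeping in the deterministic $L^q_t$ estimate: one must track exactly how the combined weight $t^\gamma \langle n\rangle^{\alpha+\sigma} e^{-|n|^2 t}$ behaves, split the frequency sum/integral at $|n|\sim t^{-1/2}$, and verify that the hypothesis $(\sigma+\alpha-2\gamma)q<2$ is precisely what makes the resulting power of $t$ integrable on $[0,T]$ near the origin. The probabilistic parts — Minkowski to swap norms (valid since $r\geq p\geq q$), the Khinchin-type bound from Lemma \ref{largedev}, and the Markov-plus-optimization passage to the exponential tail — are routine.
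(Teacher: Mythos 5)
Your proposal is essentially the paper's own proof: two applications of Minkowski's inequality (valid since $r\ge p\ge q$) to bring the $L^r(\Omega)$ norm inside, the Khinchin-type bound of Lemma \ref{largedev} applied pointwise in $(x,t)$, a bound on the resulting square function that keeps the $\ell^2_n$ sum equal to $\|\ff\|_{H^{-\alpha}}$ while the symbol is controlled uniformly in $n$, and then the $L^q_t([0,T])$ integrability of $t^{\gamma-(\sigma+\alpha)/2}$, which is exactly where the hypothesis $(\sigma+\alpha-2\gamma)q<2$ enters; the tail estimate \eqref{probE} via Markov's inequality and optimization in $r$ is the same Bienaym\'e--Tchebishev argument the paper cites from Proposition 4.4 of \cite{BTa}. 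One caution about your intermediate display: the reduction in which you pull $\|\ff\|_{H^{-\alpha}}$ out as a supremum over coefficients and then try to bound $\big\| t^\gamma \big(\sum_n \langle n\rangle^{2\alpha}|n|^{2\sigma}e^{-2|n|^2 t}\big)^{1/2}\big\|_{L^q_t}$ is not valid under the stated hypothesis, since summing the weights over all $n$ costs an extra volume factor $t^{-d/4}$ near $t=0$ and that route would require the stronger condition $(\sigma+\alpha+\tfrac{d}{2}-2\gamma)q<2$; your subsequent ``more carefully'' correction --- bounding $|n|^{\sigma}\langle n\rangle^{\alpha}e^{-|n|^2 t}\lesssim 1+t^{-(\sigma+\alpha)/2}$ uniformly in $n$ and retaining $\|\langle n\rangle^{-\alpha}\widehat{\ff}(n)\|_{\ell^2_n}=\|\ff\|_{H^{-\alpha}}$ --- is the right argument and coincides with the paper's use of $t^{\beta/2}|n|^{\beta}e^{-t|n|^2}\le C$ for $\beta\in\{\sigma,\sigma+\alpha\}$ (the auxiliary remark $\sum_n e^{-|n|^2 t}\lesssim t^{-d/2}$ is not needed there and should be dropped).
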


\begin{proof}
For  $t\ne 0$, using the notation $\vec h(x) =  \langle -\Delta \rangle^{-\frac{\alpha}{2}} \ff (x)$ and recalling the notation 
defined in \eqref{map} we have
\begin{eqnarray}
t^{\gamma} (-\Delta)^{\frac{\sigma}{2}} e^{t\Delta} \ff^\omega (x)
& = & t^{\gamma} (-\Delta)^{\frac{\sigma}{2}} \langle -\Delta \rangle^{\frac{\alpha}{2}}  e^{t\Delta} \langle -\Delta \rangle^{-\frac{\alpha}{2}} \ff^\omega (x) \nonumber \\
&=& t^{\gamma} \sum_{n\in \Z^d} |n|^\sigma (1 + |n|^2)^{\frac{\alpha}{2}} e^{-t|n|^2} \widehat {\vec {h}^{\omega}}(n) e_n(x)\nonumber \\
& \lesssim & J_{\sigma} + J_{\sigma + \alpha}, \label{probl-S12}
\end{eqnarray}
where for 
\begin{equation} \label{probl-beta} 
\beta \in \{ \sigma, \sigma + \alpha \}
\end{equation} 
we introduced $J_{\beta}$ as follows: 
$$ 
J_{\beta} =  t^{\gamma - \frac{\beta}{2}} \sum_{n\in \Z^d} t^{\frac{\beta}{2}} |n|^{\beta} e^{-t|n|^2}   \widehat {\vec {h}^{\omega}}(n) e_n(x).
$$

In order to estimate $J_{\beta}$, we observe that $t^{\frac{\beta}{2}} |n|^{\beta}  e^{-t|n|^2} \leq C$, which together with 
two applications of Minkowski's inequality, followed by an application of Lemma \ref{largedev} 
implies the first inequality in the following estimate: 
\begin{align} 
	\| J_{\beta} \|_{L^r(\Omega;  L^q( [0,T]; L^p_x)} 
	& \leq C_r \left \| \left(\sum_{n}\left| t^{\gamma - \frac{\beta}{2}} 
	 \widehat {\vec {h}}(n) e_n(x) \right|^2\right)^{\frac{1}{2}}\right \|_{L^q([0,T]; L^p_x)} \nonumber \\
	& = C_r \left\| \sum_{n} t^{2\gamma - \beta}  
	\left|  \widehat {\vec {h}}(n) e_n(x) \right|^2 \right \|^{\frac{1}{2}}_{L^{\frac{q}{2}}([0,T]; \, L^{\frac{p}{2}}_x)} \nonumber \\
	& \leq C_{r,p} \| \vec h\|_{L^2} \left( \int_0^T \left( \frac{1}{t^{\beta - 2\gamma}} \right)^{\frac{q}{2}} \right)^{\frac{1}{q}} \nonumber \\
	& = C_{r,p,q}  \| \vec h\|_{L^2} T^{\frac{1}{q} + \frac{2\gamma - \beta}{2}}, \label{afterl2} 
\end{align}
as long as 
\begin{equation} \label{probl-cond} 
(\beta - 2\gamma) \frac{q}{2} < 1, 
\end{equation} 
which is for our range \eqref{probl-beta} of $\beta$ satisfied under the assumption that $(\sigma + \alpha - 2\gamma)q < 2$. 
Now the estimate \eqref{lp} follows from \eqref{probl-S12}, \eqref{afterl2} and \eqref{probl-beta}.  

To prove estimate \eqref{probE} one uses the Bienaym\'e-Tchebishev's inequality as in Proposition 4.4 in \cite{BTa} which relies on Lemma \ref{largedev}.
\end{proof}

\section{Difference Equation and Equivalent Formulations} \label{diff-equiv}


In this section we consider two formulations of the initial value problem for the difference equation in 
\begin{equation}\label{dns0}
\left\{\begin{array}{ll}
\partial_t \ww = \Delta \ww - {\PP} \nabla (\ww \otimes \ww) + c_1 [{\PP} \nabla (\ww \otimes \vec g) + {\PP} \nabla (\vec g \otimes \ww)] +  c_2 {\PP} \nabla (\vec g \otimes \vec g) \\
\nabla \cdot \ww=0, \\
  \ww( x, 0)  =0.
  \end{array}\right.
\end{equation} 
We start with the definition of weak solutions to \eqref{dns0}.
 Again using the notation from Subsection \ref{subsec-not} 
we have:
\begin{definition}\label{diff-weak-sol}
 Assume that $\nabla \cdot \vec g=0$. A weak solution to the   initial value problem \eqref{dns0} on $[0,T]$, is a function $\vec w\in L^2((0,T);V)\cap L^\infty((0,T);H)$ satisfying 
$\dfrac{d \vec w}{dt}\in L^1((0,T); V')$ and such that for almost every $t$  and for all $  \vec v \in V$,
\begin{equation}
\langle \frac{d\vec w}{dt}, \vec v \rangle + ((\vec w,\vv)) + b(\vec w,\vec w,\vv)+ 
b(\ww,\vec g,\vec v) + b(\vec g, \ww,\vec v) + b(\vec g,\vec g,\vv) =0 \quad   \label{diff-ns-weak}
\end{equation}
and
\begin{equation}\lim_{t\rightarrow 0+}\vec w(t)=0 \quad \mbox{weakly in  the $H$ topology}. \label{diff-t-limit}
\end{equation}
\end{definition}

Now we state and prove the equivalence lemma, which is similar to the version for the Navier-Stokes equations themselves, see e.g. \cite{FLT} and Theorem 11.2 in \cite{LR}.
\begin{lemma}[The Equivalence Lemma]\label{equivalence} 
Let $T >0$, and  assume that $\nabla \cdot \vec g=0$ and 
\begin{align} 
& \| \vec g(x,t) \|_{L^2} \lesssim (1 + \frac{1}{t^{\frac{\alpha}{2}}}). \label{1-assum-g1} 
\end{align} 
Furthermore, assume that: 
\begin{equation} \label{C-L4L6}
\left\{\begin{array}{ll}
\|  \vec g\|_{L^4([0,T], L^4_x)} \leq C, \quad \mbox{ if } d = 2 \\
\| ({-\Delta})^{\frac{1}{4}} \vec  g \|_{L^2([0,T], L^6_x)}  \, + \, \| ({-\Delta})^{\frac{1}{4}} \vec  g \|_{L^{\frac{8}{3}}([0,T], L^{\frac{8}{3}}_x)} \, +\, \| \vec  g \|_{L^8([0,T], L^8_x)} \leq C, \quad \mbox{ if } d = 3,
\end{array}\right.
\end{equation}
for some $C>0$. Then the following statements are equivalent. 
\begin{enumerate}
\item[(S1)] $\ww$ is a weak solution to the  initial value problem \eqref{dns0}.
\smallskip
\item[(S2)] The function $ \ww \in L^\infty((0,T); H )  \cap L^2((0, T), V),  $  solves 
\begin{equation}\label{w-int}
	\ww(t)= -  \int_0^t e^{(t-s)\Delta} \nabla \vec F(x,s) \; ds, 
\end{equation} 
where 
\begin{equation}\label{vecF}
\vec F(x,s) =  - {\PP} (\ww \otimes \ww) +  c_1 [{\PP}  (\ww \otimes \vec g) + {\PP} (\vec g \otimes \ww)] + c_2 {\PP} (\vec g \otimes \vec g). 
\end{equation}
\end{enumerate}
\end{lemma}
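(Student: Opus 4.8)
The plan is to prove the two implications (S1)$\Rightarrow$(S2) and (S2)$\Rightarrow$(S1) separately, following the template of Theorem 11.2 in \cite{LR} but adapted to the forced difference equation \eqref{dns0}. Throughout, the hypotheses \eqref{1-assum-g1} and \eqref{C-L4L6} are exactly what is needed to make the forcing term $\vec g\otimes\vec g$ and the mixed terms $\ww\otimes\vec g$, $\vec g\otimes\ww$ lie in the right space-time Lebesgue spaces so that $e^{(t-s)\Delta}\nabla(\cdot)$ is well defined and the Duhamel integral converges.

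\textbf{Step 1: (S1)$\Rightarrow$(S2).} Starting from a weak solution $\ww\in L^\infty((0,T);H)\cap L^2((0,T);V)$ satisfying \eqref{diff-ns-weak}, I would first observe that $\dfrac{d\ww}{dt}\in L^1((0,T);V')$ together with the equation identifies the time derivative: in the sense of $V'$-valued distributions, $\partial_t\ww = \Delta\ww - {\PP}\nabla(\ww\otimes\ww) + c_1[{\PP}\nabla(\ww\otimes\vec g)+{\PP}\nabla(\vec g\otimes\ww)] + c_2{\PP}\nabla(\vec g\otimes\vec g)$. Since $\ww\in L^2((0,T);V)$ and $\ww\in L^\infty((0,T);H)$, one checks $\ww\otimes\ww\in L^2((0,T);L^1)\cap\dots$ (using Ladyzhenskaya/Sobolev interpolation: in $d=2$, $\ww\in L^4((0,T);L^4)$, in $d=3$, $\ww\in L^{8/3}((0,T);L^4)$ etc.), and that the mixed terms lie in $L^1_{\text{loc}}((0,T);V')$ by Hölder against the bounds \eqref{C-L4L6}; hence $\vec F(\cdot,s)$ of \eqref{vecF} is integrable enough that $s\mapsto e^{(t-s)\Delta}\nabla\vec F(x,s)$ is in $L^1((0,t);V')$ (the smoothing of the heat semigroup absorbs the one derivative). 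Then apply the semigroup: set $\vec z(t)=\ww(t)+\int_0^t e^{(t-s)\Delta}\nabla\vec F(x,s)\,ds$ and compute, using that $\partial_t$ of the Duhamel term is $\nabla\vec F + \Delta\int_0^t e^{(t-s)\Delta}\nabla\vec F\,ds$, to find $\partial_t\vec z = \Delta\vec z$ in $V'$ with $\vec z(0)=0$ (the initial condition \eqref{diff-t-limit}); by uniqueness for the heat equation with zero data, $\vec z\equiv 0$, which is \eqref{w-int}. Care is needed to justify interchanging $\partial_t$ with the integral and to make sense of everything at $t=0$ via the weak limit \eqref{diff-t-limit}; I would do this by pairing with a test function $\vec v\in V$ and a smooth cutoff in time, integrating by parts, and then removing the cutoff.

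\textbf{Step 2: (S2)$\Rightarrow$(S1).} Conversely, given $\ww\in L^\infty((0,T);H)\cap L^2((0,T);V)$ solving the integral equation \eqref{w-int}, I would pair \eqref{w-int} with $\vec v\in V$, differentiate in $t$, and use standard properties of the heat semigroup ($\dfrac{d}{dt}e^{(t-s)\Delta}\nabla\vec F = \Delta e^{(t-s)\Delta}\nabla\vec F$, plus the boundary contribution at $s=t$) to recover \eqref{diff-ns-weak}; the projection ${\PP}$ built into $\vec F$ together with $\vec v\in V$ being divergence free lets one rewrite the ${\PP}\nabla(\cdot\otimes\cdot)$ terms as the trilinear forms $b(\cdot,\cdot,\vec v)$ via \eqref{tri-rev}. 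One also must verify $\dfrac{d\ww}{dt}\in L^1((0,T);V')$ — this follows since each term on the right of the equation is in $L^1_{\text{loc}}((0,T);V')$ by the same Hölder estimates as in Step 1 — and that the weak initial limit \eqref{diff-t-limit} holds, which is immediate from \eqref{w-int} since the Duhamel integral tends to $0$ in $H$ (or at least weakly) as $t\to0+$, again using \eqref{1-assum-g1} and \eqref{C-L4L6} to control the singularity of $\|\vec g\|$ near $s=0$ (the condition $\alpha<1/2$ in $d=2$, $\alpha<1/4$ in $d=3$ makes the relevant time integrals converge).

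\textbf{Main obstacle.} The crux is not the formal semigroup manipulation but the functional-analytic bookkeeping: checking that $\vec F(\cdot,s)$, and after applying $e^{(t-s)\Delta}\nabla$ the integrand, genuinely lives in the spaces claimed, so that all the integrations by parts in $t$ and all the interchanges of $\partial_t$ with $\int_0^t$ are legitimate — in particular handling the weak (not strong) continuity at $t=0$ and the mild time-singularity of $\vec g$ near $0$. This is where the precise exponents in \eqref{C-L4L6} and the restrictions on $\alpha$ are used, and it is the part that requires the most care in $d=3$.
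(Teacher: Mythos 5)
Your plan is correct and follows essentially the same route as the paper: for (S1)$\Rightarrow$(S2) you compare $\ww$ with the Duhamel expression and conclude by uniqueness for the heat flow with zero (weak) initial data, and for (S2)$\Rightarrow$(S1) you differentiate the integral representation in $V'$, with the hypotheses \eqref{1-assum-g1}, \eqref{C-L4L6} and the energy-space membership of $\ww$ supplying the integrability of $\vec F$ needed to justify these manipulations — exactly the structure of the paper's proof, which likewise defers the space-time bookkeeping for the Duhamel term to the near-$t=0$ estimates of Theorem \ref{th-dns}.
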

\begin{proof}
We first show that {\it (S2)} implies {\it (S1)}.  Assume that $\ww$ solves the integral equation \eqref{w-int}. Define 
$$\vec W(\ww)(x,t)= -\int_0^te^{(t-s)\Delta}\nabla \vec F(x,s) \; ds.$$
Using the assumption  \eqref{1-assum-g1} on $\vec g$ and the fact that   $\ww \in L^\infty((0,T);H )  \cap L^2((0, T); V)$,  
it follows that $\vec F(x,s)\in L^1( (0,T); L^1_x)$. Hence $\nabla \vec F(x,s)\in L^1((0 ,T); \DD^{\prime}_x)$ and 
\begin{equation}e^{(t-s)\Delta}\nabla \vec F(x,s) \in  L^1( (0,T); C^\infty_x).
\label{better}
\end{equation}
We can now take the time derivative of $\vec W(\ww)(t,x)$ and easily show that
$$\partial_t \vec W(\ww)(x,t)=\Delta \vec W(\ww)(x,t)-\nabla \vec F(x,t),$$ where the equality holds in $V^{\prime}$.
Since $\ww=\vec W(\ww)$, by \eqref{w-int} and  in particular  $\lim_{t \rightarrow 0+}\ww(x,t)=0$,   we now have that $\ww$ solves \eqref{dns0}
weakly. 

\medskip
Next we show that {\it (S1)} implies {\it (S2)}. Let 
$$ \vec \Psi(x, t):= -\int_0^te^{(t-s)\Delta}\nabla \vec F(x,s) \; ds.$$ where $F(x,s)$ is as above.  Under the assumptions on $\vec g$ and $\ww$ and arguing as in the proof of
the energy estimates Theorem \ref{th-dns} near time zero, we have that $\Psi  \in L^\infty((0,T); H )  \cap L^2((0, T);  V)$ and $ \dfrac{d \Psi}{dt}   \in L^1((0, T); V').$ We then have 
$$\partial_t(\vec \Psi - \ww)=\Delta (\vec \Psi -\ww)$$ where the equality holds in $V^{\prime}$ 
and $\lim_{t\rightarrow 0+} \, (\ww(t)-\vec \Psi(t)\,) \,=\, 0$ weakly in $ H.$  A standard uniqueness 
result for the heat flow finally gives $\vec \Psi=\ww.$
\end{proof}

\bigskip

\section{Energy Estimates for the difference equation} \label{sec-estimates} 

In this section we establish energy estimates for the difference equation in \eqref{dns0}. These {\it{a priori}} estimates for $\ww$  will be used in 
Section \ref{sec-galerkin} where we construct weak solutions, see also  \cite{Tao}.

At this point it is useful to give a name to the left hand side of \eqref{energy}. We define in fact  the {\it energy} functional for $\ww$,  
\begin{equation} \label{en} 
E(\ww)(t) = \|\ww(t)\|_{L^2}^2 + c \int_0^t \int_{{\mathbb T}^d} | \nabla \otimes \ww|^2 \, dx \, ds
\end{equation} and prove the following theorem:

\begin{theorem} \label{th-dns} 
Let $T >0$, $\lambda > 0$,\,  $\gamma <0$ and $\alpha>0$ be given. Let $\vec g$ be a function such that $\nabla \cdot \vec g=0$ and 
\begin{align} 
& \| \vec g(x,t) \|_{L^2} \lesssim (1 + \frac{1}{t^{\frac{\alpha}{2}}}), \label{assum-g1} \\
& \|\nabla^k \vec g(x,t) \|_{L^{\infty}} \lesssim \left( \max \{ t^{-1}, t^{-(k+\alpha+\frac{d}{2})} \} \right)^{\frac{1}{2}}, \mbox{ for } k = 0,1\label{assum-g2} 
\end{align} 
and
\begin{equation} \label{L4L6}
\left\{\begin{array}{ll}
\| t^{\gamma} \vec g\|_{L^4([0, T];L^4_x)} \leq \lambda, \quad \mbox{ if } d = 2 \\
\| t^{\gamma} (-\Delta)^{\frac{1}{4}} \vec  g \|_{L^2([0,T];L^6_x)}  \, +\, \| t^{\gamma} (-\Delta)^{\frac{1}{4}} \vec  g \|_{L^{\frac{8}{3}}([0,T];L^{\frac{8}{3}}_x)} + \| t^{\gamma}\vec  g \|_{L^8([0,T];L^8_x)} \leq \lambda, \quad \mbox{ if } d = 3. 
\end{array}\right.
\end{equation} 
Let $ \ww \in L^\infty((0,T); (L^2(\T^d))^d )  \cap L^2((0, T);  (\dot H^1(\T^d))^d)$  be a solution to  \eqref{dns0}.  Then,  
\begin{align} 
& E(\ww)(t) \lesssim C(T,\lambda,\alpha), \quad \mbox{ for all } t \in [0, T]. \label{th-dns-enbound}  \\
& \| \frac{d}{dt} \ww \|_{L^p_t H^{-1}_x } \leq C(T,\lambda,\alpha), \label{th-dns-wrate} 
\end{align} 
with 
\begin{equation*}
p = 
\left\{\begin{array}{ll}
2, \quad \mbox{ if } d = 2 \\
\frac{4}{3}, \quad \mbox{ if } d = 3. 
\end{array}\right.
\end{equation*}
\end{theorem}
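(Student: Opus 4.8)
The plan is to run the classical energy method for Leray-Hopf solutions, taking $\vec v = \vec w(t)$ as test function in \eqref{diff-ns-weak} and controlling the extra $\vec g$-dependent trilinear terms using the hypotheses \eqref{assum-g1}--\eqref{L4L6}. First I would record that, after pairing with $\vec w$, the term $b(\vec w,\vec w,\vec w)$ vanishes by divergence-freeness of $\vec w$, and $b(\vec g,\vec w,\vec w)$ also vanishes by divergence-freeness of $\vec g$; this is the standard cancellation. Hence we are left with
\[
\tfrac{1}{2}\tfrac{d}{dt}\|\vec w\|_{L^2}^2 + \|\nabla\otimes\vec w\|_{L^2}^2 = -\,b(\vec w,\vec g,\vec w) - b(\vec g,\vec g,\vec w),
\]
and the whole game is to absorb the right-hand side into $\tfrac12\|\nabla\otimes\vec w\|_{L^2}^2$ plus integrable-in-time multiples of $\|\vec w\|_{L^2}^2$ and of an $L^1_t$ forcing term, so that Grönwall closes. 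For the linear-in-$\vec w$-squared term $b(\vec w,\vec g,\vec w)=\int w_j (D_j g_i) w_i$, I would use \eqref{assum-g2} with $k=1$: it gives $\|\nabla\vec g(t)\|_{L^\infty}\lesssim (\max\{t^{-1},t^{-(1+\alpha+d/2)}\})^{1/2}$, so $|b(\vec w,\vec g,\vec w)|\lesssim \|\nabla\vec g(t)\|_{L^\infty}\|\vec w\|_{L^2}^2$. This coefficient is \emph{not} in $L^1_t$ near $0$ (it behaves like $t^{-(1+\alpha+d/2)/2}$), so near time zero I would instead integrate by parts to move the derivative onto $\vec w$: $b(\vec w,\vec g,\vec w) = -\,b(\vec w,\vec g,\vec w)$-type manipulation is not available, so rather I'd write $b(\vec w,\vec g,\vec w)=-\int g_i\, D_j(w_j w_i)\,dx = -2\int g_i w_j D_j w_i\,dx$ and estimate by Hölder as $\lesssim \|\vec g\|_{L^{p_1}}\|\vec w\|_{L^{p_2}}\|\nabla\vec w\|_{L^2}$, then interpolate $\|\vec w\|_{L^{p_2}}$ between $L^2$ and $\dot H^1$ and use Young to split off $\varepsilon\|\nabla\vec w\|_{L^2}^2$; the $\vec g$-norm that appears is exactly the one controlled in \eqref{L4L6} ($L^4_x$ for $d=2$; a mix of $L^6_x,L^{8/3}_x,L^8_x$ with a quarter-derivative for $d=3$, which is why those particular norms are hypothesized).

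Concretely, for $d=2$: $|b(\vec w,\vec g,\vec w)|\lesssim \|\vec w\|_{L^4}\|\vec g\|_{L^4}\|\nabla\vec w\|_{L^2}$, and $\|\vec w\|_{L^4}^2\lesssim \|\vec w\|_{L^2}\|\nabla\vec w\|_{L^2}$ (Ladyzhenskaya/Gagliardo-Nirenberg on $\T^2$), giving $\lesssim \|\vec g\|_{L^4}\|\vec w\|_{L^2}^{1/2}\|\nabla\vec w\|_{L^2}^{3/2}\le \varepsilon\|\nabla\vec w\|_{L^2}^2 + C_\varepsilon \|\vec g\|_{L^4}^4\|\vec w\|_{L^2}^2$; and $|b(\vec g,\vec g,\vec w)|\lesssim \|\vec g\|_{L^4}^2\|\nabla\vec w\|_{L^2}\le \varepsilon\|\nabla\vec w\|_{L^2}^2 + C_\varepsilon\|\vec g\|_{L^4}^4$. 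Since the weight $t^\gamma$ with $\gamma<0$ in \eqref{L4L6} only makes things worse, $\|\vec g\|_{L^4_tL^4_x}\le \|t^\gamma \vec g\|_{L^4_tL^4_x}\le\lambda$ (using $t\le T$ and $\gamma<0$ so $t^{-\gamma}\le$ const... actually one must be careful: $t^\gamma\ge T^\gamma$ on $[0,T]$ when $\gamma<0$, so $\|\vec g\|_{L^4}\le T^{-\gamma}\|t^\gamma\vec g\|_{L^4}\le T^{-\gamma}\lambda$), hence $\|\vec g\|_{L^4_tL^4_x}^4\in L^1_t$ with norm bounded by $C(T,\lambda)$. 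Then Grönwall in the form $\tfrac{d}{dt}\|\vec w\|_{L^2}^2 \le a(t)\|\vec w\|_{L^2}^2 + b(t)$ with $a,b\in L^1_t([0,T])$ and $\|\vec w(0)\|_{L^2}=0$ yields \eqref{th-dns-enbound}. For $d=3$ the Sobolev exponents are tighter: I'd split $b(\vec w,\vec g,\vec w)$ and $b(\vec g,\vec g,\vec w)$ using Hölder with the $L^6_x, L^{8/3}_x, L^8_x$ norms of $(-\Delta)^{1/4}\vec g$ or $\vec g$, then interpolate $\|\vec w\|_{L^q}$ between $L^2$ and $\dot H^1$ (via $\dot H^1\hookrightarrow L^6$ on $\T^3$) and Young; the fractional-derivative norms enter because $\nabla\vec w$ only lies in $L^2_x$, so one trades a quarter derivative from $\vec w$ onto a commutator/product term involving $\vec g$. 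This is exactly the place where the earlier erratum-driven revision (acknowledged in the paper) matters, and it is the \textbf{main obstacle}: getting the $d=3$ trilinear estimates to close with only $\|\vec w\|_{L^\infty_tL^2_x}\cap L^2_t\dot H^1_x$ control on $\vec w$ and the precise $\vec g$-norms listed in \eqref{L4L6}, while handling the singular-in-time weights near $t=0$ separately from the regime $t\gtrsim$ const (where \eqref{assum-g2} gives the clean bound $\|\nabla\vec g\|_{L^\infty}\lesssim t^{-1/2}\in L^1_t$... no, $t^{-1/2}\in L^1_{t,\mathrm{loc}}$, fine).

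Finally, for \eqref{th-dns-wrate} I would read off the equation: $\tfrac{d}{dt}\vec w = \Delta\vec w - \PP\nabla(\vec w\otimes\vec w) + c_1\PP\nabla(\vec w\otimes\vec g + \vec g\otimes\vec w) + c_2\PP\nabla(\vec g\otimes\vec g)$, and estimate each term in $H^{-1}_x$. The term $\Delta\vec w$ is $O(\|\nabla\vec w\|_{L^2})\in L^2_t$; $\PP\nabla(\vec w\otimes\vec w)$ contributes $\|\vec w\otimes\vec w\|_{L^2_x}\lesssim\|\vec w\|_{L^4}^2$, which for $d=2$ is in $L^2_t$ (by Ladyzhenskaya and \eqref{th-dns-enbound}) and for $d=3$ only in $L^{4/3}_t$ (interpolating $L^4_x$ between $L^2$ and $L^6=\dot H^1$ gives $\|\vec w\|_{L^4}^2\lesssim \|\vec w\|_{L^2}^{1/2}\|\nabla\vec w\|_{L^2}^{3/2}\in L^{4/3}_t$), which dictates the stated $p$; the mixed and pure-$\vec g$ terms are handled by the same Hölder/interpolation bookkeeping as above and are no worse. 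Collecting the weakest exponent gives $\tfrac{d}{dt}\vec w\in L^p_tH^{-1}_x$ with $p=2$ ($d=2$), $p=4/3$ ($d=3$), with norm $C(T,\lambda,\alpha)$, completing the proof.
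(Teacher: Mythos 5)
Your $d=2$ argument is essentially sound and in fact somewhat leaner than the paper's: by integrating the cross term by parts onto $\vec w$ and paying with $\varepsilon\|\nabla \vec w\|_{L^2}^2$, you avoid ever using the non-integrable-in-time bound \eqref{assum-g2} near $t=0$, and the Gr\"onwall coefficient $\|\vec g\|_{L^4_x}^4$ is indeed in $L^1_t$ because, for $\gamma<0$, the weighted hypothesis \eqref{L4L6} controls the unweighted norm on $[0,T]$. The paper proceeds differently: it splits $[0,T]$ into $[0,\delta^*]$ and $[\delta^*,T]$, uses the Equivalence Lemma \ref{equivalence} and the Duhamel form \eqref{w-int} near zero together with Lemma 14.1 and maximal regularity from Lemari\'e-Rieusset and a continuity argument (with smallness coming from $(\delta^*)^{-\gamma}$), and only runs the differential energy argument with $\|\nabla\vec g\|_{L^\infty}$ on $[\delta^*,T]$, where that quantity is integrable. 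Your sketch of \eqref{th-dns-wrate} matches the paper's proof term by term.

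The genuine gap is the $d=3$ energy bound, which is the heart of \eqref{th-dns-enbound} and which you do not prove: you list the norms in \eqref{L4L6}, speculate about trading a quarter derivative onto ``a commutator/product term involving $\vec g$,'' and then explicitly label the closing of the three-dimensional trilinear estimates ``the main obstacle.'' An acknowledged obstacle is not a proof, and the mechanism you hint at is not the one that works: in a direct energy estimate no fractional derivative of $\vec w$ ever appears, so there is no commutator to form. (For the record, your own integration-by-parts device does extend to $d=3$: H\"older with exponents $(8,\tfrac83,2)$ gives $|b(\vec w,\vec g,\vec w)|\le\|\vec g\|_{L^8_x}\|\vec w\|_{L^{8/3}_x}\|\nabla\vec w\|_{L^2_x}$, and interpolating $L^{8/3}$ between $L^2$ and $L^6\supset \dot H^1$ plus Young yields a Gr\"onwall coefficient $\sim\|\vec g\|_{L^8_x}^{16/5}\in L^1_t$, while $b(\vec g,\vec g,\vec w)$ needs only $\|\vec g\|_{L^4_x}^2\|\nabla\vec w\|_{L^2_x}$; but none of this is in your proposal.) By contrast, the paper's $d=3$ proof near $t=0$ is where the quarter derivatives genuinely enter: it augments the energy to $E_{\frac12}(\ww)=E(\ww)+E((-\Delta)^{\frac14}\ww)$, estimates $\|((-\Delta)^{\frac14}+I)\vec F\|_{L^2_tL^2_x}$ using the $L^2_tL^6_x$ and $L^{8/3}_tL^{8/3}_x$ norms of $(-\Delta)^{\frac14}\vec g$ and the $L^8_tL^8_x$ norm of $\vec g$, and closes by continuity. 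Finally, a smaller but real point: your Gr\"onwall argument starts from $\|\vec w(0)\|_{L^2}=0$, whereas Definition \ref{diff-weak-sol} only gives the weak limit \eqref{diff-t-limit}; strong attainment of the zero datum (and hence the right to integrate the energy inequality from $t=0$) is exactly what the paper secures by passing to the integral formulation near the origin, a step your proposal omits.
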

\begin{remark}In the course of the proof below we will rely on the Equivalence Lemma \ref{equivalence} and use the integral equation formulation \eqref{w-int} for $\ww$ near time zero and the other one \eqref{dns0}  away from zero. Note that \eqref{assum-g1},  \eqref{assum-g2} and \eqref{L4L6} ensure that \eqref{1-assum-g1} and \eqref{C-L4L6} hold. 
\end{remark}

\begin{proof} 
In order to prove \eqref{th-dns-enbound}, we consider two cases: $t $ near zero and $t$ away from zero. 
\medskip

\noindent {\bf{Case} ${\mathbf{t}}$ near zero:} First, thanks to the Equivalence Lemma \ref{equivalence} 
we can write $\ww$ using formulation \eqref{w-int}, 
\begin{equation}\label{w-int2}
	\ww(t)= - \int_0^t e^{(t-s)\Delta} \nabla \vec F(x,s) \; ds, 
\end{equation} 
where 
$\vec F(x,s)$ is like in \eqref{vecF}. Then, we use a continuity argument as follows. 

Assume $0 \leq \tau \leq \delta^{\ast}$, where $\delta^{\ast}$ is to be determined later. We need to proceed in different ways depending on whether
$d=2$ or $d=3$.  For $d=2$ and $\tau \in [0, \delta^{\ast}]$ 
by applying Lemma 14.1 from \cite{LR} we have: 
\begin{equation} \label{c1-wL2bound} 
	\| \ww(t) \|_{L^2_x} \lesssim \|\vec F\|_{L^2([0, \tau]; L^2_x)}, \quad \quad \mbox{ for all } t \in [0, \tau].
\end{equation} 
Also by applying the maximal regularity, see e.g. Theorem 7.3 in \cite{LR}, we obtain: 
\begin{equation} \label{c1-wH1bound} 
	\| \ww \|_{L^2 ([0, \tau]; H^1_x)} \lesssim \|\vec F\|_{L^2 ([0, \tau]; L^2_x)}.
\end{equation} 
Hence it suffices to analyze $\|\vec F\|_{L^2([0, \tau]; L^2_x)}$. We have, 
\begin{equation} \label{c1-Festimate} 
	\| \vec F\|_{L^2([0, \tau]; L^2_x)} 
	\lesssim  \|\ww \otimes \ww\|_{L^2([0, \tau]; L^2_x)} +   \|\ww \otimes \vec g\|_{L^2([0, \tau];L^2_x)} 
	+  \|\vec g \otimes \ww\|_{L^2([0, \tau]; L^2_x)} +  \|\vec g \otimes \vec g\|_{L^2([0, \tau]; L^2_x)}. 
\end{equation} 
We proceed by estimating the terms on the RHS of \eqref{c1-Festimate}. 

\medskip 

We observe that: 
\begin{equation} \label{c1-dd} 
\|\ww \otimes \ww\|_{L^2([0, \tau]; L^2_x)}   = \|\ww\|_{L^4([0, \tau]; L^4_x)}^2  \lesssim E(\ww)(\tau),
\end{equation} 
where in the last step we used that the norm $L^4_t L^4_x$ can be bounded via interpolating the two spaces 
 $L^{\infty}_t L^2_x$ and $L^2_t \dot{H}^1_x$ that appear in $E(\ww)(t)$. 

For the next two terms by H\"{o}lder's inequality we have:
\begin{align} 
	\|\ww \otimes \vec g\|_{L^2([0, \tau]; L^2_x)} + \|\vec g \otimes \vec w\|_{L^2([0, \tau]; L^2_x)} 
	& \lesssim \|\vec g\|_{L^p([0, \tau]; L^p_x)} \|\ww\|_{L^{\frac{2p}{p-2}}([0, \tau]; L^{\frac{2p}{p-2}}_x)} \nonumber \\
	& \lesssim  \|\vec g\|_{L^p([0, \tau]; L^p_x)}  \|D^{\frac{d}{p}} \ww\|_{L^{\frac{2p}{p-2}}([0, \tau]; L^2_x)} \label{c1-rd-Sob} \\
	& \lesssim  \|t^{\gamma} \vec g\|_{L^p([0, \tau]; L^p_x)} (\delta^{\ast})^{-\gamma} \|D^{\frac{d}{p}} \ww\|_{L^{\frac{2p}{p-2}}([0, \tau]; L^2_x)} (\delta^{\ast})^{\frac{p-2-d}{2p}}, \label{c1-rd-tHol} 
	 \end{align}
where to obtain \eqref{c1-rd-Sob} we used Sobolev embedding, 
and to obtain \eqref{c1-rd-tHol} we used H\"{o}lder's inequality in $t$ under the assumption that $p \geq 4$.

By letting $p=4$  in \eqref{c1-rd-tHol}, it follows from the assumptions  \eqref{L4L6} on $\vec g$, 
in conjunction with interpolation between the spaces that appear in $E(\ww)(t)$,  that 
\begin{equation}
	\|\ww \otimes \vec g\|_{L^2([0, \tau]; L^2_x)} + \|\vec g \otimes \vec w\|_{L^2([0, \tau]; L^2_x)}
	\lesssim \lambda \; (\delta^{\ast})^{-\gamma} E^{\frac{1}{2}}(\ww)(\tau), \label{c1-rd} 
\end{equation}


\medskip 

Finally thanks again to \eqref{L4L6} the last term can be estimated as
\begin{equation} \label{c1-rr}
	\|\vec g \otimes \vec g\|_{L^2([0, \tau]; L^2_x)}  =  \|\vec g\|_{L^4([0, \tau]; L^4_x)}^2  \leq \big(\lambda  (\delta^{\ast})^{-\gamma}\big)^2, 
\end{equation}
To conclude, we combine the estimates \eqref{c1-wL2bound} - \eqref{c1-Festimate} with \eqref{c1-dd}, \eqref{c1-rd} and \eqref{c1-rr} we obtain: 
\begin{equation} \label{c1-enest}
	E^{\frac{1}{2}}(\ww)(\tau) 
	\leq C_1 E(\ww)(\tau) + C_2 \lambda  (\delta^{\ast})^{-\gamma} E^{\frac{1}{2}}(\ww)(\tau) + C_3  \big(\lambda  (\delta^{\ast})^{-\gamma} \big)^2.
\end{equation} 
Hence if we denote $E^{\frac{1}{2}}(\ww)(\tau) = X$, we obtain the inequality: 
$$ 
	X \leq C_1 X^2 + C_2 \lambda   (\delta^{\ast})^{-\gamma} X + C_3 \big(\lambda  (\delta^{\ast})^{-\gamma} \big)^2.
$$ 
If we choose $\delta^{\ast}$ small enough so that $C_3  \big(\lambda  (\delta^{\ast})^{-\gamma} \big)^2 < \epsilon$ (where $\epsilon$ 
is a small absolute constant depending only on $C_1, C_2$ and $C_3$), then by a continuity argument $X$ is bounded for all
$\tau \in [0,  {\delta^{\ast}}]$. We  thus obtain 
\begin{equation} 
	E(\ww)(\tau) \leq C, \label{c1-bd}  
\end{equation} 
for all $\tau \in [0,\delta^*]$.

For $d=3$ we cannot directly close the estimates in terms of the energy $E(\ww)$. Instead we need to work with $E(\ww) + E((-\Delta)^{\frac{1}{4}} \ww) =: E_{\frac{1}{2}}(\ww)$. 
Now, for   $\tau \in [0, \delta^{\ast}]$  apply once again Lemma 14.1 from \cite{LR} we have: 
\begin{equation} \label{c1-wL2bound12} 
	\| ((-\Delta)^{\frac{1}{4}} + I )\ww(t) \|_{L^2_x} \lesssim \| ((-\Delta)^{\frac{1}{4}} + I ) \vec F\|_{L^2([0, \tau]; L^2_x)}, \quad \quad \mbox{ for all } t \in [0, \tau].
\end{equation} 
Also by applying the maximal regularity, see e.g. Theorem 7.3 in \cite{LR}, we obtain: 
\begin{equation} \label{c1-wH1bound12} 
	\| ((-\Delta)^{\frac{1}{4}} + I ) \ww \|_{L^2( [0, \tau]; H^1_x)} \lesssim \|((-\Delta)^{\frac{1}{4}} + I ) \vec F\|_{L^2([0, \tau]; L^2_x)}.
\end{equation} 
Hence it suffices to analyze $\|((-\Delta)^{\frac{1}{4}} + I )\vec F\|_{L^2([0, \tau]; L^2_x)}$. We have, 
\begin{eqnarray} \label{c1-Festimate12} 
	&&\| ((-\Delta)^{\frac{1}{4}} + I ) \vec F\|_{L^2([0, \tau]; L^2_x)}
	\lesssim  \| [((-\Delta)^{\frac{1}{4}} + I ) \ww] \otimes \ww\|_{L^2([0, \tau]; L^2_x)} \\\notag
	&+ &  \|[((-\Delta)^{\frac{1}{4}} + I )\ww] \otimes \vec g\|_{L^2([0, \tau]; L^2_x)} 
	+  \|[((-\Delta)^{\frac{1}{4}} + I )\vec g] \otimes \ww\|_{L^2([0, \tau]; L^2_x)} \\\notag
	&+&  \| [((-\Delta)^{\frac{1}{4}} + I )\vec g ]\otimes \vec g\|_{L^2([0, \tau]; L^2_x)}. 
\end{eqnarray} 
We proceed by estimating the terms on the RHS of \eqref{c1-Festimate12}. 

\medskip 

By using the same argument as in the proof of Theorem 15.2 in  \cite{LR}, we have that: 
\begin{equation} \label{c1-dd12} 
\| [((-\Delta)^{\frac{1}{4}} + I )\ww]\otimes \ww\|_{L^2([0, \tau]; L^2_x)}   \lesssim E_{\frac{1}{2}}(\ww)(\tau),
\end{equation} 

Next, by H\"{o}lder's and Sobolev's inequalities we have:
\begin{align*} 
	\| [((-\Delta)^{\frac{1}{4}} + I )\ww] \otimes \vec g\|_{L^2([0, \tau]; L^2_x)}
	&  \lesssim \|\vec g\|_{L^4([0, \tau]; L^6_x)} \|  [((-\Delta)^{\frac{1}{4}} + I ) \ww\|_{L^4([0, \tau]; L^3_x)}\\
	& \lesssim \|\vec g\|_{L^4([0, \tau]; L^6_x)} \|  \ww\|_{L^4([0, \tau]; H^1_x)}.  
	\end{align*}
Now we note that $ \|  \ww\|_{L^4([0, \tau]; H^1_x)} \lesssim [E_{\frac{1}{2}}(\ww)]^{\frac{1}{2}}(\tau)$ by interpolation. Hence 
\begin{equation}\label{c1-rd-tHol12}
\| [((-\Delta)^{\frac{1}{4}} + I )\ww] \otimes \vec g\|_{L^2([0, \tau]; L^2_x)}  \lesssim  \|t^{\gamma} \vec g\|_{L^8([0, \tau]; L^8_x)} (\delta^{\ast})^{-\gamma +\frac{1}{8}} [E_{\frac{1}{2}}(\ww)]^{\frac{1}{2}}.
 \end{equation}
	
For the next term we have 
\begin{eqnarray*}
	\|\ww \otimes  [((-\Delta)^{\frac{1}{4}} + I )\vec g] \|_{L^2([0, \tau]; L^2_x)}  &\lesssim& \| ((-\Delta)^{\frac{1}{4}} + I )\vec g \|_{L^2([0, \tau]; L^6_x)} \| \ww \|_{L^\infty([0, \tau]; L^3_x)} \\ 
	&\lesssim&  (\delta^{\ast})^{-\gamma} \| t^\gamma ((-\Delta)^{\frac{1}{4}} + I )\vec g \|_{L^2([0, \tau]; L^6_x)} \| \ww \|_{L^\infty([0, \tau]; H^{\frac{1}{2}}_x)}.\end{eqnarray*}
Hence 
\begin{equation}\label{c1-rd-tHol12-2}
\| \ww \otimes  [((-\Delta)^{\frac{1}{4}} + I )g]\|_{L^2([0, \tau]; L^2_x)} \lesssim   (\delta^{\ast})^{-\gamma} \| t^\gamma ((-\Delta)^{\frac{1}{4}} + I )\vec g \|_{L^2([0, \tau]; L^6_x)} [E_{\frac{1}{2}}(\ww)]^{\frac{1}{2}}.
 \end{equation}

\medskip 

Finally thanks again to \eqref{L4L6} the last term can be estimated as
\begin{equation} \label{c1-rr12}
	\|[((-\Delta)^{\frac{1}{4}} + I )\vec g] \otimes \vec g\|_{L^2([0, \tau]; L^2_x)} \lesssim  \|((-\Delta)^{\frac{1}{4}} + I )\vec g\|_{L^\frac{8}{3}([0, \tau]; L^\frac{8}{3}_x)} \|\vec g\|_{L^8([0, \tau]; L^8_x)}  \leq \big(\lambda  (\delta^{\ast})^{-\gamma}\big)^2, 
\end{equation}

To conclude, we combine the estimates \eqref{c1-wL2bound12} - \eqref{c1-Festimate12} with \eqref{c1-dd12}, \eqref{c1-rd-tHol12}, \eqref{c1-rd-tHol12-2} and \eqref{c1-rr12} to obtain: 
\begin{equation} \label{c1-enest}
	[E_{\frac{1}{2}}(\ww)(\tau)]^{\frac{1}{2}} 
	\leq C_1 E_{\frac{1}{2}}(\ww)(\tau) + C_2 \lambda  (\delta^{\ast})^{-\gamma + \frac{1}{8}} [E_{\frac{1}{2}}(\ww)(\tau)]^{\frac{1}{2}} + C_3  \big(\lambda  (\delta^{\ast})^{-\gamma} \big)^2.
\end{equation} 
Hence if we denote $[E_{\frac{1}{2}}(\ww)(\tau)]^{\frac{1}{2}} = X$, we obtain the inequality: 
$$ 
	X \leq C_1 X^2 + C_2 \lambda   (\delta^{\ast})^{-\gamma + \frac{1}{8}} X + C_3 \big(\lambda  (\delta^{\ast})^{-\gamma} \big)^2.
$$ 
If we choose $\delta^{\ast}$ small enough so that $C_3  \big(\lambda  (\delta^{\ast})^{-\gamma} \big)^2 < \epsilon$ (where $\epsilon$ 
is a small absolute constant depending only on $C_1, C_2$ and $C_3$), then by a continuity argument $X$ is bounded for all
$\tau \in [0,  {\delta^{\ast}}]$. We  thus obtain 
\begin{equation} 
	E(\ww)(\tau) \lesssim E_{\frac{1}{2}}(\ww)(\tau) \leq C, \label{c1-bd12}  
\end{equation} 
for all $\tau \in [0,\delta^*]$.

\medskip 

\noindent {\bf{Case}} ${\mathbf{t \in [\delta^*, T]:}}$ By the standard energy argument for \eqref{dns0} we have, 
\begin{align}
	\frac{d}{dt} E(\ww)(t) 
	& = \int_{{\T}^d} 2 \ww(x,t) \cdot \ww_t(x,t) \, dx + 2 \int_{{\T}^d} |\nabla \otimes \ww|^2(x,t) \, dx \nonumber \\ 
	& = \int_{{\T}^d} 2 \ww\,  \Delta \ww \, dx - 2 \int_{{\T}^d} \ww \cdot \PP \nabla (\ww \otimes \ww) \, dx + 2 \int_{{\T}^d} |\nabla \otimes \ww|^2 \, dx \label{c2-difen-ns} \\ 
	& + 2c_1  \left( \int_{{\T}^d} \ww \cdot \PP \nabla (\ww \otimes \vec g) \, dx  + \int_{{\T}^d} \ww \cdot \PP \nabla (\vec g \otimes \ww) \, dx \right) 
	   + 2c_2 \int_{{\T}^d} \ww \cdot \PP \nabla (\vec g \otimes \vec g) \, dx. \label{c2-difen-rest} 
\end{align}
Now we observe that 
the expression in \eqref{c2-difen-ns} equals zero as in the case of solutions to the Navier-Stokes equations itself. It remains to estimate \eqref{c2-difen-rest}. 
In order to do that we first note that since $\vec g$ is divergence-free, 
$\int_{{\T}^d} \ww \cdot \PP \nabla (\ww \otimes \vec g) \, dx = \int_{{\T}^d} \ww \cdot \PP (\vec g \cdot \nabla) \ww \, dx$ and the last expression equals zero 
thanks to the skew-symmetry property. Also since $\ww$ is divergence-free too, we observe that:  
\begin{equation}  \label{c2-difen-rest1} 
\int_{{\T}^d} \ww \cdot \PP \nabla (\vec g \otimes \ww) \, dx =  \int_{{\T}^d} \ww \cdot \PP (\ww \cdot \nabla \vec g) \, dx \lesssim \|\ww\|_{L^2_x}^2 \| \nabla \vec g\|_{L_x^{\infty}}.
\end{equation} 
On the other hand by H\"{o}lder's inequality, 
\begin{equation} 
	\int_{{\T}^d} \ww \cdot \PP \nabla (\vec g \otimes \vec g) \, dx 
	\leq \|\ww\|_{L^2_x} \|\vec g\|_{L^2_x} \| \nabla \vec g\|_{L_x^{\infty}} \label{c2-difen-rest2}. 
\end{equation} 
Now by combining \eqref{c2-difen-rest}, \eqref{c2-difen-rest1} and \eqref{c2-difen-rest2} and 
using the assumptions \eqref{assum-g1} and \eqref{assum-g2} we obtain: 
\begin{align} 
	\frac{d}{dt} E(\ww)(t) 
	& \lesssim  E(\ww)(t) \| \nabla \vec g\|_{L_x^{\infty}} + E^{\frac{1}{2}}(\ww)(t)  \|\vec g\|_{L^2_x} \| \nabla \vec g\|_{L_x^{\infty}} \nonumber \\
	& \lesssim  h(t)  E(\ww)(t) + m(t) E^{\frac{1}{2}}(\ww)(t) \label{c2-usingprob}
\end{align} 
with 
\begin{align*} 
	h(t) & = \left( \max \{ t^{-1}, t^{-(1+\alpha+\frac{d}{2})} \} \right)^{\frac{1}{2}}, \\
	m(t) & = (1 + \frac{1}{t^{\frac{\alpha}{2}}} ) \left( \max \{ t^{-1}, t^{-(1+\alpha+\frac{d}{2})} \} \right)^{\frac{1}{2}}. 
\end{align*} 
Furthermore, using the above expressions for $h(t)$ and $m(t)$ we get: 
\begin{eqnarray} \label{c2-h} 
	\int_{\delta^*}^T h(t) \, dt & =& \int_{\delta^*}^{1} h(t) \, dt + \int_{1}^T h(t) \; dt \\
	&=& \int_{\delta^*}^{1} \frac{1}{ t^{\frac{1}{2}(1+\alpha +\frac{d}{2})} } \, dt + \int_{1}^T \frac{1}{t^{\frac{1}{2}}} \; dt  
	\lesssim  \,  (\delta^{*})^{\frac{1}{2}-\frac{\alpha}{2}  - \frac{d}{4}} + T^{\frac{1}{2}}, \nonumber 
\end{eqnarray} 
and 
\begin{eqnarray} \label{c2-m} 
	\int_{\delta^*}^T m(t) \, dt 
	& =& \int_{\delta^*}^{1} m(t) \, dt + \int_{1}^T m(t) \; dt  \\
	& = &\int_{\delta^*}^{1} \frac{1}{ t^{\frac{1}{2}+\alpha +\frac{d}{4}} } \, dt + \int_{1}^T \frac{1}{t^{\frac{1+\alpha}{2}}} \; dt 
	\lesssim \, (\delta^{*})^{\frac{1}{2} - \alpha  - \frac{d}{4}} + T^{\frac{1-\alpha}{2}}. \nonumber
\end{eqnarray} 
Therefore by combining \eqref{c2-usingprob}, \eqref{c2-h} and \eqref{c2-m} we obtain the bound
\begin{equation} \label{c2-bd} 
	E(\ww)(t) \leq C(T, \delta^*, \alpha)  
\end{equation}
for all $t \in [\delta^*, T]$. 
Now \eqref{th-dns-enbound} follows from \eqref{c1-bd} and \eqref{c2-bd}.

\medskip
To prove \eqref {th-dns-wrate} we  let 
\begin{equation*}
p = 
\left\{\begin{array}{ll}
2, \quad \mbox{ if } d = 2 \\
\frac{4}{3}, \quad \mbox{ if } d = 3. 
\end{array}\right.
\end{equation*}

Since $\vec w$ satisfies \eqref{dns0} we observe that: 
\begin{align} \label{th-dns-rate-bd} 
	 \| \frac{d}{dt}\ww\|_{L^p([0,T]; H^{-1}_x)} 
	 &\lesssim \| \Delta \ww\|_{L^p([0,T]; H^{-1}_x)}  + \| \nabla (\ww \otimes \ww)\|_{L^p([0,T]; H^{-1}_x)}  \\
	 &+  \| \nabla (\ww \otimes \vec g)\|_{L^p([0,T]; H^{-1}_x)}
	+ \| \nabla (\vec g \otimes \ww)\|_{L^p([0,T]; H^{-1}_x)}   \\ 
	&+ \| \nabla (\vec g \otimes \vec g)\|_{L^p([0,T]; H^{-1}_x)}.     \nonumber
\end{align}
We estimate the first term on the RHS of \eqref{th-dns-rate-bd} as follows: 
\begin{equation} \label{th-dns-rate-1}
	\| \Delta \ww\|_{L^p([0,T]; H^{-1}_x)} \lesssim 
	\left\{\begin{array}{ll}
	\|\nabla \ww\|_{L^2([0,T]; L^2_x)}, \quad \mbox{ if } d = 2 \\
	\|\nabla \ww\|_{L^{\frac{4}{3}}([0,T]; L^2_x)} \lesssim T^{\frac{1}{4}} \|\nabla \ww\|_{L^2([0,T]; L^2_x)}, \quad \mbox{ if } d = 3. 
\end{array}\right.
\end{equation}
To estimate the second term on the RHS of  \eqref{th-dns-rate-bd}  we notice that for $d=2$
\begin{equation}  \label{th-dns-rate-2d2}
	\| \nabla (\vec w \otimes \ww)\|_{L^p([0,T]; H^{-1}_x)} \lesssim  \left\| \ww \otimes \ww \right\|_{L^2([0,T]; L^2_x)} 
	 \leq \|\ww\|_{L^4([0,T]; L^4_x)}^2 \lesssim E(\ww)(T),  
\end{equation} 
while for $d=3$ we use Gagliardo-Nirenberg's inequality to  obtain:  
\begin{equation} \label{th-dns-rate-2d3}
	\| \nabla (\ww \otimes \ww)\|_{L^p([0,T]; H^{-1}_x)}	
	 \lesssim \left\| \, \|\ww\|_{L^2_x}^{\frac{1}{2}} \|\ww\|_{H^1_x}^{\frac{3}{2}} \, \right\|_{L^{\frac{4}{3}}([0,T])} 
	 	 \lesssim  \|\ww\|_{L^{\infty}([0,T]; L^2_x)}^{\frac{1}{2}} \|\ww\|_{L^2([0,T]; H^1_x)}^{\frac{3}{2}} \nonumber\\ 
        \lesssim E(\ww)(T). 
\end{equation} 
To estimate the third term on the RHS of  \eqref{th-dns-rate-bd}, for $d=2$, we proceed as follows: 
\begin{align}
	\| \nabla (\ww \otimes \vec g)\|_{L^p([0,T]; H^{-1}_x)}^2   	
	& = \int_0^T \| \nabla (\ww \otimes \vec g)\|_{H^{-1}_x}^2 \, ds  \nonumber \\
	& = \int_0^T \| \ww \otimes \vec g\|_{L^2_x}^2 \, ds \nonumber \\ 
	& \leq \|\vec g\|_{L^4([0,T]; L^4_x)}^2 \|\ww\|_{L^4([0,T]; L^4_x)}^2 \label{th-dns-rate-3d2-inter} \\
	& \leq \left(\lambda T^{-\gamma}\right)^2  \|\ww\|_{L^4([0,T]; L^4_x)}^2 \label{th-dns-rate-3d2a} \\
	& \leq C(T,\lambda) \; E(\ww)(T), \label{th-dns-rate-3d2}  	
\end{align}
where to obtain \eqref{th-dns-rate-3d2a} we used the assumption \eqref{L4L6} on $\vec g$.
The fourth term can be estimated analogously for $d=2$. On the other hand to estimate the third term for $d=3$ we have,  

\begin{align}  \| \nabla (\ww \otimes \vec g)\|_{L^p([0,T]; H^{-1}_x)}^2 	&  \leq  \| \ww \otimes \vec g \|_{L^{\frac{4}{3}}([0,T]; L^2_x)}^2  \nonumber \\
& \leq \|\vec g\|_{L^2([0,T]; L^6_x)}^2  \|\vec w\|_{L^4([0,T]; L^3_x)}^2  \nonumber \\
&\leq C_T  \left(\lambda T^{-\gamma}\right)^2  \|t^\gamma \vec g\|_{L^8([0,T]; L^8_x)}^2  \|\vec w\|_{L^4([0,T]; H^{\frac{1}{2}}_x)}^2  \nonumber \\
& \leq C(T, \lambda) \; E(\ww)(T). \label{th-dns-rate-3d3} 
\end{align}

Also the fourth term for $d=3$ can be estimated analogously. 

Finally, in order to estimate the fifth term on the RHS of  \eqref{th-dns-rate-bd}, for $d=2$, we proceed in a similar way as when we 
estimated the third term for $d=2$: 
\begin{align}
	\| \nabla (\vec g \otimes \vec g)\|_{L^p([0,T]; H^{-1}_x)}^2 	
	& = \int_0^T \| \nabla (\vec g \otimes \vec g)\|_{H^{-1}_x}^2 \, ds \nonumber \\
	& = \int_0^T \| \vec g \otimes \vec g\|_{L^2_x}^2 \, ds \nonumber \\ 
	& \leq \|\vec g\|_{L^4([0,T]; L^4_x)}^4 \nonumber \\
	& \leq \left(\lambda T^{-\gamma}\right)^4, \label{th-dns-rate-5d2}
\end{align}
where to obtain the last line we used the assumptions of the theorem.
On the other hand, to estimate the fifth term for $d=3$ we  have 
\begin{align}
	\| \nabla (\vec g \otimes \vec g)\|_{L^p([0,T]; H^{-1}_x)}^2 	
	& \leq T ^{\frac{1}{2}} \| \nabla (\vec g \otimes \vec g)\|_{L^2_t H^{-1}_x}^2  	\nonumber \\
	& \leq T ^{\frac{1}{2}} \int_0^T \| \vec g \otimes \vec g\|_{L^2_x}^2 \, ds \nonumber \\ 
	& \leq T ^{\frac{1}{2}} \|\vec g\|_{L^4([0,T]; L^4_x)}^2  \nonumber \\
	& \leq C_T  \|\vec g\|_{L^8([0,T]; L^8_x)}^2 \nonumber \\
	& \leq C_T (\lambda T^{-\gamma})^2.  \label{th-dns-rate-5d3}
\end{align} 
Collecting the above estimates we obtain \eqref{th-dns-wrate}. 

\end{proof}

\section{Construction of weak solutions to the difference equation} \label{sec-galerkin}

In this section we construct weak solutions to the initial value problem \eqref{dns0}.  We denote the spatial Fourier tranform of $\vec f$ as
$$ 
	\ft{{\vec f}}(\bk,t) = \int_{\T^d} {\vec f}(x,t) e^{- i \bk \cdot x} dx,
$$
with inverse transform 
$$ 
	\vec f(x,t)   = \sum_{\bk} \ft{\vec f}(\bk ,t) e^{i \bk \cdot x},
$$ 
where $\bk$ represents the discrete wavenumber: 
$$ 
	\bk = \sum_{j=1}^d (2 \pi n_j) {\be}_j, \quad \quad n_j \in {\mathbb Z}, 
$$
and ${\be}_j$ is the unit vector in the $j$-th direction.  By $P_M$ we denote the rectangular Fourier projection operator: 
$$ 
	P_M \vec f = \sum_{ \{\bk \, : \, |n_j| \leq M \, \text{ for } 1 \leq j \leq d \} } \ft{\vec f}(\bk) e^{i \bk \cdot x}.   $$	

Note $P_M$ is a bounded operator in $L^p(\mathbb T^d)$ for $1 < p < \infty$.

\begin{theorem}\label{construction-weak}
Let $T >0$, $\lambda > 0$,\,  $\gamma <0$ and $\alpha>0$ be given. 
Assume that the function $\vec g$ satisfies $\nabla \cdot \vec g = 0$ and
\begin{align} 
& \| \vec g(x,t) \|_{L^2} \lesssim (1 + \frac{1}{t^{\frac{\alpha}{2}}}) \label{2-assum-g1} \\
& \|\nabla^k  P_M \vec g(x,t) \|_{L^{\infty}} \lesssim \left( \max \{ t^{-1}, t^{-(k+\alpha+\frac{d}{2})} \} \right)^{\frac{1}{2}} \mbox{ for } k =0,1.  \label{2-assum-g2}
\end{align} 
Furthermore, assume that we have: 
\begin{equation} \label{lambda-L4L6}
\left\{\begin{array}{ll}
\| t^{\gamma} \vec g\|_{L^4([0, T];L^4_x)} \, \leq \, \lambda, \quad \mbox{ if } d = 2 \\
\| t^{\gamma} (-\Delta)^{\frac{1}{4}} \vec  g \|_{L^2([0,T];L^6_x)}  \, +\, \| t^{\gamma} (-\Delta)^{\frac{1}{4}} \vec  g \|_{L^{\frac{8}{3}}([0,T];L^{\frac{8}{3}}_x)} + \| t^{\gamma}\vec  g \|_{L^8([0,T];L^8_x)} \leq \lambda, \quad \mbox{ if } d = 3. 
\end{array}\right.\end{equation}
Then there exists a weak solution $\ww$ for the  initial value problem \eqref{dns0} in the sense of Definition \ref{diff-weak-sol}.
\end{theorem}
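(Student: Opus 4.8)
The plan is to obtain $\ww$ as a limit of Galerkin approximations. For each integer $M$ we seek $\ww_M$ with $P_M\ww_M=\ww_M$ solving the truncated system
\[
\partial_t\ww_M=\Delta\ww_M-P_M\PP\nabla(\ww_M\otimes\ww_M)+c_1P_M[\PP\nabla(\ww_M\otimes\vec g)+\PP\nabla(\vec g\otimes\ww_M)]+c_2P_M\PP\nabla(\vec g\otimes\vec g),\qquad\ww_M(\cdot,0)=0.
\]
In Fourier coordinates this is a finite system of ODEs for $\{\ft{\ww_M}(\bk,t):|n_j|\le M\}$ whose right-hand side is quadratic in the unknowns with coefficients continuous in $t$ (controlled by the norms of $\vec g$ in the hypotheses), so by Cauchy--Lipschitz there is a unique solution on a maximal interval $[0,T_M)$; and since $\ww_M(\cdot,0)=0$, Duhamel's formula shows $\ww_M$ satisfies the projected analogue of \eqref{w-int}--\eqref{vecF}, with $P_M$ inserted and $\ww_M$ in place of $\ww$.

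The next step is a priori bounds uniform in $M$. Testing the truncated equation against $\ww_M$ (and, when $d=3$, working in addition with the modified energy $E(\ww_M)+E((-\Delta)^{1/4}\ww_M)$ used in the proof of Theorem \ref{th-dns}) and repeating that proof \emph{verbatim} gives $E(\ww_M)(t)\le C(T,\lambda,\alpha)$ for all $t\in[0,T_M)$, uniformly in $M$: near $t=0$ one uses the projected Duhamel identity in place of the Equivalence Lemma \ref{equivalence}, and away from $t=0$ the differential form of the truncated equation. Nothing changes in the estimates because $P_M$ is self-adjoint, commutes with $\Delta$ and $e^{t\Delta}$, and is bounded on $L^p_x$ for $1<p<\infty$ (so $\|t^\gamma P_M\vec g\|_{L^q_tL^p_x}\lesssim\|t^\gamma\vec g\|_{L^q_tL^p_x}$), while the only $L^\infty_x$ information needed on $\vec g$ is supplied directly by hypothesis \eqref{2-assum-g2} on $P_M\vec g$. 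Hence $T_M=T$, the family $\{\ww_M\}$ is bounded in $L^\infty((0,T);(L^2(\T^d))^d)\cap L^2((0,T);(\dot H^1(\T^d))^d)$, and -- running the computation for \eqref{th-dns-wrate} with $P_M$ inserted and $\|P_M\cdot\|_{H^{-1}_x}\le\|\cdot\|_{H^{-1}_x}$ -- $\{\frac{d}{dt}\ww_M\}$ is bounded in $L^p((0,T);H^{-1}_x)\subset L^p((0,T);V')$ with $p=2$ for $d=2$ and $p=\tfrac43$ for $d=3$.

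With these bounds, since $V$ embeds compactly into $H$ and $H\hookrightarrow V'$, the Aubin--Lions--Simon lemma produces a subsequence (not relabeled) and $\ww\in L^\infty((0,T);H)\cap L^2((0,T);V)$ with $\ww_M\rightharpoonup\ww$ weakly in $L^2((0,T);V)$, weak-$*$ in $L^\infty((0,T);H)$, $\ww_M\to\ww$ strongly in $L^2((0,T);H)$, and $\frac{d}{dt}\ww_M\rightharpoonup\frac{d}{dt}\ww$ in $L^p((0,T);V')$. Fix $\vv\in V$ and $\phi\in C_c^\infty((0,T))$ and pass to the limit in the weak formulation for $\ww_M$ tested against $P_M\vv$: the linear and $\frac{d}{dt}$ terms converge by weak convergence together with $P_M\vv\to\vv$ in $H^1_x$; the term $b(\vec g,\vec g,P_M\vv)$ converges since only $P_M\vv$ depends on $M$; and the terms $b(\ww_M,\ww_M,P_M\vv)$, $b(\ww_M,\vec g,P_M\vv)$, $b(\vec g,\ww_M,P_M\vv)$ converge to their limits because $\ww_M\to\ww$ strongly in $L^2_tL^2_x$ while bounded in $L^2_t\dot H^1_x$ and $\vec g$ has the integrability of \eqref{lambda-L4L6}. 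Thus $\ww$ satisfies \eqref{diff-ns-weak}. \textbf{This passage to the limit in the quadratic terms is the main obstacle}: it is exactly what makes the time-derivative bound \eqref{th-dns-wrate} and the resulting Aubin--Lions compactness (upgrading weak to strong $L^2_tL^2_x$ convergence) indispensable. It remains to verify \eqref{diff-t-limit}: since $\frac{d}{dt}\ww\in L^1((0,T);V')$ and $\ww\in L^\infty((0,T);H)\hookrightarrow L^\infty((0,T);V')$, after modification on a null set $\ww\in C([0,T];V')$, so $\ww(0)$ makes sense in $V'$; and because the near-time-zero continuity argument in the proof of Theorem \ref{th-dns} yields $\|\ww_M(\tau)\|_{L^2_x}\le\eta(\tau)$ with $\eta(\tau)\to0$ as $\tau\to0^+$ \emph{independently of $M$}, this bound survives in the limit, whence $\ww(\tau)\to0$ weakly in $H$ as $\tau\to0^+$. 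Therefore $\ww$ is a weak solution of \eqref{dns0} in the sense of Definition \ref{diff-weak-sol}; the one point to keep watching throughout is that every estimate be genuinely independent of $M$, which is guaranteed by stating hypothesis \eqref{2-assum-g2} for $P_M\vec g$ and by the $L^p$-boundedness of $P_M$.
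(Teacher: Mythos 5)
Your proposal is correct and follows essentially the same route as the paper: Fourier--Galerkin truncation with zero data, uniform-in-$M$ energy and time-derivative bounds obtained by rerunning the proof of Theorem \ref{th-dns} (using that $P_M$ is bounded on $L^p_x$ and that hypothesis \eqref{2-assum-g2} is stated for $P_M\vec g$), and a compactness passage to the limit. The only cosmetic differences are that the paper solves the truncated ODE system by a fixed-point argument in integral form (which also handles the possible $t^{-\alpha/2}$-type singularity of $\vec g$ at $t=0$, where your phrase ``coefficients continuous in $t$'' is not quite accurate, though easily repaired), and that the paper leaves the Aubin--Lions compactness step as a cited standard argument, which you spell out.
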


\begin{remark} Since $P_M$ is a bounded operator in $L^p$ for $1 < p < \infty$,  
$P_Mg$ satisfies \eqref{2-assum-g1} and  \eqref{lambda-L4L6} as  $g$ itself. So from this point on we will not make a distinction between $P_Mg$ and $g$.

Also, to keep the notation light, in the proof of Theorem \ref{construction-weak} we shall consider the initial value problem \eqref{dns0} 
with $c_1 = c_2 = 1$.
\end{remark}

\begin{proof} 
In the construction of weak solutions, we follow in part the approach based on Galerkin approximations from Chapter 5 of 
Doering and Gibbon \cite{DG} and from Chapter 8 of Constantin and  Foias \cite{CF}.  From now on we drop the vector notation to keep the notation light.   The plan is to construct a global weak solution via finding its Fourier coefficients, which, in turn, will be achieved  by solving finite dimensional ODE systems for them.
To determine the ODE systems we 
start by formally applying the Fourier transform to the difference equation \eqref{dns0}.
 \begin{align} 
	\frac{d}{dt} \ft{w}(\bk,t) 
	& = - k^2 \ft{w}(\bk,t)  \label{Fdns2} \\
	& + i \left( I - \frac{\bk \bk^{T}}{k^2} \right) \sum_{\bk'+\bk'' = \bk} \Big(
	\ft{w}(\bk',t) \cdot \bk'' \ft{w}(\bk'',t)
	+ \ft{w}(\bk',t) \cdot \bk'' \ft{g}(\bk'',t) \nonumber \\
	& \quad \quad \quad \quad + \ft{g}(\bk',t) \cdot \bk'' \ft{w}(\bk'',t)
	+ \ft{g}(\bk',t) \cdot \bk'' \ft{g}(\bk'',t) \Big), \nonumber \\
	\bk \cdot \ft{w}(\bk,t) & = 0, \label{F-dns2-div}  
\end{align} 
where  $\ft{w} ({\bf{0}} , t ) =0$,  and  $I - \frac{\bk \bk^{T}}{k^2}$ is the projection onto the divergence-free vector fields in Fourier  space. Here $I$ is the unit tensor and 
$| \bk|= k$.  

As in \cite{DG},  we now introduce the Galerkin approximations as truncated Fourier expansions. More precisely, 
let $M$ be a positive integer and consider 
\begin{equation} \label{k-trunc} 
	\bk = \sum_{j=1}^d (2 \pi n_j) {\be}_j, \quad \quad n_j = \pm 1, \pm 2, ..., \pm M.
\end{equation}

We look for the complex variables  $\ft{w^M}(\bk,t)$, $\bk$ as in \eqref{k-trunc},  solving the following finite system of ODE: 
\begin{eqnarray} \label{GFdns2}
	&&\frac{d}{dt} \ft{w^M}(\bk,t) 
	 = - k^2 \ft{w^M}(\bk,t)    \\\notag
	& + & i \left( I - \frac{\bk \bk^{T}}{k^2} \right) \sum_{\bk'+\bk'' \stackrel{M}{=} \bk} \Big(\ft{w^M}(\bk',t) \cdot \bk'' \ft{w^M}(\bk'',t)\\\notag
	&+& \ft{w^M}(\bk',t)
	 \cdot \bk'' \ft{g}(\bk'',t)   
	 +  \ft{g}(\bk',t) \cdot \bk'' \ft{w^M}(\bk'',t) + \ft{g}(\bk',t) \cdot \bk'' \ft{g}(\bk'',t) \Big),\\ \label{GF-dns2-div} 
	&& \bk \cdot \ft{w^M}(\bk,t)  = 0,  
\end{eqnarray} 
where the sum over $\bk'$ extends over the range where both $\bk'$ and $\bk''$ are as in \eqref{k-trunc}.
The system \eqref{GFdns2} - \eqref{GF-dns2-div} is considered  with zero initial data.
 
Let $T>0$ be fixed. We now show that for any fixed $M>0$, the ODE system \eqref{GFdns2} admits a unique solution in  $X_T:=C([0,T], \ell^2)\cap L^2([0,T], \bk\ell^2)$. 
We proceed by a fixed point argument.  Define 
\begin{align*} 
	\Phi( \ft{w^M})(\bk,t) \, 
	& :=  \, - \int_0^t    k^2 \ft{w^M}(\bk,s)  ds \\
	& + \int_0^t   \Big[ i \left( I - \frac{\bk \bk^{T}}{k^2} \right) \sum_{\bk'+\bk'' \stackrel{M}{=} \bk} \Big(\ft{w^M}(\bk',s) \cdot \bk'' \ft{w^M}(\bk'',s)+ \ft{w^M}(\bk',s) \cdot \bk'' \ft{g}(\bk'',s) \\ 
	& \quad \quad \quad \quad + \, \ft{g}(\bk',s) \cdot \bk'' \ft{w^M}(\bk'',s) + \ft{g}(\bk',s) \cdot \bk'' \ft{g}(\bk'',s) \Big) \Big] \, ds.  
\end{align*}

Assume  $0 \leq t <T$. Let $\delta$ such that $0 < \delta < T$ be determined later. For $t \in [0, \delta]$, we have
\begin{eqnarray} \label{close0}
 \Vert  \Phi( \ft{w^M})(\cdot,t) \Vert_{\ell^2} &\lesssim&    M^2 \, \delta\, \Vert \ft{w^M}\Vert_{L_t^{\infty}\ell^2} + \delta M^{1+\frac{d}{2}} \Vert \ft{w^M}\Vert^2_{L_t^{\infty}\ell^2} \\
 &+& M^{1+\frac{d}{2}+} \, \delta^{1- \frac{\alpha}{2}}  \, \Vert \ft{w^M}\Vert_{L_t^{\infty}\ell^2} + M \delta^{h(d)-2\gamma} \lambda^2, \nonumber 
 \end{eqnarray}
with 
\begin{equation*}
h(d) = 
\left\{\begin{array}{ll}
\frac{1}{2}, \quad \mbox{ if } d = 2 \\
\frac{3}{4},\quad \mbox{ if } d = 3.
\end{array}\right.
\end{equation*}
To obtain the second term on the RHS above we used Plancherel and the  Sobolev's embedding for the $L^4_x$ norm. To obtain the third term,  
we used Plancherel, \eqref{2-assum-g1} and the Sobolev embedding for the $L^{\infty}_x$ norm of  $\mathcal F^{-1}( \ft{w^M})$, the inverse Fourier transform of $\ft{w^M}$. 
Finally for the fourth term we used Plancherel and \eqref{lambda-L4L6}. 
In a similar manner we can also show that 
\begin{eqnarray} \label{close1}
 \Vert  \bk\Phi( \ft{w^M})(\cdot,t) \Vert_{L^2([0,\delta],\ell^2)} &\lesssim&    M^3 \, \delta^\frac{3}{2}\, \Vert \ft{w^M}\Vert_{L_t^{\infty}\ell^2} + \delta^\frac{3}{2} M^{2+\frac{d}{2}+} \Vert \ft{w^M}\Vert^2_{L_t^{\infty}\ell^2} \\
 &+& M^{2+\frac{d}{2}+} \, \delta^{-\gamma+ \theta(d)}  \, \Vert \ft{w^M}\Vert_{L_t^{\infty}\ell^2}\, \lambda \, + \, M^2 \delta^{\rho(d)-2\gamma} \lambda^2, \nonumber 
 \end{eqnarray}
with 
\begin{equation*}
\theta(d) = 
\left\{\begin{array}{ll}
\frac{5}{4}, \quad \mbox{ if } d = 2 \\
\frac{11}{8},\quad \mbox{ if } d = 3
\end{array}\right. \quad \rho(d) = 
\left\{\begin{array}{ll}
1, \quad \mbox{ if } d = 2 \\
\frac{5}{4},\quad \mbox{ if } d = 3.
\end{array}\right.   
\end{equation*}
If we let $R = 2 M$ and $\delta= \delta(M, \lambda)$ small enough, we have the $\Phi$ maps
balls of radius $R$ in $X_\delta$ to themselves continuously. A similar argument shows $\Phi$ is also a contraction and as a consequence the ODE system \eqref{GFdns2} 
has a unique solution $\ft{w^M}$ in $X_\delta$. Therefore by applying Plancherel's Theorem we conclude that the function $w^M (x, t)$, given by the inverse Fourier transform of $\{ \ft{w^M}(\bk,t) \}_{\bk}$, belongs to  
$L^{\infty}([0, \delta]; (L^2({\mathbb T}^d))^d) \cap L^2 ([0, \delta];  (\dot{H}^1({\mathbb T}^d))^d)$.

We next note that in $[0, \delta]$ the function $w^M (x, t)$ is a solution to 
the following system: 
\begin{equation} \label{dns2-proj}
\left\{\begin{array}{ll}
	 \partial_t w^M 
	= - \Delta w^M - P_M \Big[ {\PP} \nabla (w^M \otimes w^M) + {\PP} \nabla (w^M \otimes P_M g)  \\
	 \quad \quad \quad \quad \quad \quad + {\PP} \nabla (P_M g \otimes w^M) + {\PP} \nabla (P_M g \otimes P_M g)\Big]  \\
	 \nabla \cdot w^M  = 0 \\
	 w^M(x,0)  = 0, 
\end{array}	 
\right.
\end{equation}

Since $P_Mg$ satisfies the
same assumptions as $g$ in  Section \ref{sec-estimates}, we can repeat the proof  of Theorem \ref{th-dns} to 
conclude that  $w^M (x, t)$ is in  $L^{\infty}([0, \delta]; H) \cap L^2 ([0, \delta]; V)$ and satisfies the energy bounds given by \eqref{th-dns-enbound}.
As a consequence we can use an iteration argument to advance the solution of  \eqref{GFdns2} up to time $T$.

Therefore the function $w^M (x, t)$ given by the inverse Fourier transform of $\{ \ft{w^M}(\bk,t) \}_{\bk}$ is  
in  $L^{\infty}((0, T); H) \cap L^2 ((0, T);  V)$ and it satisfies the energy bound given by \eqref{th-dns-enbound} and  \eqref{th-dns-wrate} in Theorem \ref{th-dns}.
Now by applying a standard compactness argument, together with the fact that $P_Mg$ converges strongly to $g$ in  $L^{p}$, one obtains  a weak 
solution $w$ to \eqref{dns0}  on $[0, T]$, (see also Chapter 8 of \cite{CF}). Since $T$ was arbitrary large, we obtained a global weak solution. 
\end{proof}

\section{Uniqueness in 2D} \label{sec-uniq} 

In this section we present a uniqueness result for solutions of the  initial value problem \eqref{dns0}
in $d=2$. The result can be stated as follows:

\begin{theorem} \label{lemma-uniq}
Assume that $\vec g$ satisfies the same conditions as in Theorem \ref{construction-weak}. 
Then in $d=2$ any two weak solutions  to  \eqref{dns0}  in $L^2([0,T]; V) \cap L^{\infty}((0, T); H)$ coincide. 
\end{theorem}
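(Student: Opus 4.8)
The plan is the classical two–dimensional weak uniqueness argument of Lions--Prodi, adapted to the forced difference equation \eqref{dns0}. Let $\ww_1,\ww_2\in L^2([0,T];V)\cap L^\infty((0,T);H)$ be two weak solutions of \eqref{dns0} in the sense of Definition~\ref{diff-weak-sol}, and set $\vec z:=\ww_1-\ww_2$. Subtracting the two weak formulations \eqref{diff-ns-weak} from one another, the common term $b(\vec g,\vec g,\vv)$ cancels, and using the bilinearity of $b$ (e.g.\ $b(\ww_1,\ww_1,\vv)-b(\ww_2,\ww_2,\vv)=b(\ww_1,\vec z,\vv)+b(\vec z,\ww_2,\vv)$) one gets, for a.e.\ $t$ and all $\vv\in V$,
\[
\Big\langle \tfrac{d\vec z}{dt},\vv\Big\rangle+((\vec z,\vv))+b(\ww_1,\vec z,\vv)+b(\vec z,\ww_2,\vv)+b(\vec z,\vec g,\vv)+b(\vec g,\vec z,\vv)=0,
\]
with $\vec z(0)=0$ in the sense of \eqref{diff-t-limit}.

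First I would record that in $d=2$ this equation has enough regularity for the energy method: the products $\ww_i\otimes\ww_j$ lie in $L^2_tL^2_x$ by Ladyzhenskaya's inequality $\|f\|_{L^4(\T^2)}^2\lesssim\|f\|_{L^2}\|\nabla f\|_{L^2}$ applied to $\vec z\in L^\infty_tH\cap L^2_tV$, while $\vec z\otimes\vec g,\ \vec g\otimes\vec z\in L^2_tL^2_x$ because $\|\vec z\|_{L^4_x},\|\vec g\|_{L^4_x}\in L^4_t$ — for $\vec g$ using \eqref{lambda-L4L6} together with $\gamma<0$, which gives $\int_0^T\|\vec g\|_{L^4_x}^4\,dt\le T^{-4\gamma}\lambda^4$. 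Hence $\tfrac{d\vec z}{dt}\in L^2((0,T);V')$, so by the standard Lions--Magenes lemma (see e.g.\ \cite{CF}) $\vec z$ has a representative in $C([0,T];H)$, the map $t\mapsto\|\vec z(t)\|_{L^2}^2$ is absolutely continuous, and $\langle\tfrac{d\vec z}{dt},\vec z\rangle=\tfrac12\tfrac{d}{dt}\|\vec z(t)\|_{L^2}^2$; combined with \eqref{diff-t-limit} this forces $\vec z(0)=0$ \emph{strongly} in $H$. (Alternatively, one can get $\|\ww_i(t)\|_{L^2}\to0$ as $t\to0^+$ from the Equivalence Lemma~\ref{equivalence} and the bound $\|\ww_i(t)\|_{L^2}\lesssim\|\vec F_i\|_{L^2([0,t];L^2_x)}$, exactly as in the ``$t$ near zero'' part of the proof of Theorem~\ref{th-dns}.)

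Next I would test the difference equation with $\vv=\vec z(t)$. Since $\ww_1$ and $\vec g$ are divergence free, $b(\ww_1,\vec z,\vec z)=b(\vec g,\vec z,\vec z)=0$ by skew–symmetry of $b$, leaving
\[
\tfrac12\tfrac{d}{dt}\|\vec z\|_{L^2}^2+\|\nabla\vec z\|_{L^2}^2=-\,b(\vec z,\ww_2,\vec z)-b(\vec z,\vec g,\vec z).
\]
For the first term I keep the derivative on $\ww_2$ and use Ladyzhenskaya and Young: $|b(\vec z,\ww_2,\vec z)|\le\|\vec z\|_{L^4}^2\|\nabla\ww_2\|_{L^2}\lesssim\|\vec z\|_{L^2}\|\nabla\vec z\|_{L^2}\|\nabla\ww_2\|_{L^2}\le\tfrac14\|\nabla\vec z\|_{L^2}^2+C\|\nabla\ww_2\|_{L^2}^2\|\vec z\|_{L^2}^2$. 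For the second term, since $\|\nabla\vec g\|_{L^2}$ is not integrable in time near $0$, I instead integrate by parts to move the derivative onto $\vec z$ (allowed since $\nabla\cdot\vec z=0$) and then use Ladyzhenskaya and Young: $|b(\vec z,\vec g,\vec z)|\le\|\vec g\|_{L^4}\|\vec z\|_{L^4}\|\nabla\vec z\|_{L^2}\lesssim\|\vec g\|_{L^4}\|\vec z\|_{L^2}^{1/2}\|\nabla\vec z\|_{L^2}^{3/2}\le\tfrac14\|\nabla\vec z\|_{L^2}^2+C\|\vec g\|_{L^4}^4\|\vec z\|_{L^2}^2$. Absorbing the gradient terms into the left side yields
\[
\tfrac{d}{dt}\|\vec z(t)\|_{L^2}^2\le\Psi(t)\,\|\vec z(t)\|_{L^2}^2,\qquad \Psi(t):=C\big(\|\nabla\ww_2(t)\|_{L^2}^2+\|\vec g(t)\|_{L^4(\T^2)}^4\big),
\]
and $\Psi\in L^1([0,T])$: $\|\nabla\ww_2\|_{L^2}^2\in L^1_t$ since $\ww_2\in L^2((0,T);V)$, and $\|\vec g\|_{L^4_x}^4\in L^1_t$ by the computation above. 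Since $\|\vec z\|_{L^\infty_tL^2_x}<\infty$, Gronwall's inequality gives $\|\vec z(t)\|_{L^2}^2\le\|\vec z(0)\|_{L^2}^2\exp\!\big(\int_0^t\Psi\big)=0$, i.e.\ $\ww_1\equiv\ww_2$.

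The routine parts are the pair of nonlinear estimates. The step I expect to require the most care is the strong attainment of the zero datum at $t=0$: the definition of weak solution only demands \emph{weak} continuity into $H$ there, so one must either invoke the Lions--Magenes regularity of $\tfrac{d\vec z}{dt}$ in $2$D — which in turn requires checking $\tfrac{d\vec z}{dt}\in L^2_tV'$, the place where the assumptions \eqref{lambda-L4L6} on $\vec g$ and the condition $\gamma<0$ enter — or, equivalently, pass through the integral formulation \eqref{w-int}--\eqref{vecF} near $t=0$ via the Equivalence Lemma~\ref{equivalence}. One should also observe that $\vec z$ has zero spatial mean (the mean is conserved and vanishes at $t=0$), so that the clean form of Ladyzhenskaya's inequality applies.
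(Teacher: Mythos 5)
Your proof is correct and follows essentially the same route as the paper: an energy estimate for the difference, skew-symmetry to kill two trilinear terms, Ladyzhenskaya plus Young to absorb the gradient, and Gronwall with the coefficient $\|\nabla\ww_2\|_{L^2}^2+\|\vec g\|_{L^4}^4\in L^1([0,T])$ (the latter via \eqref{lambda-L4L6} and $\gamma<0$). Your additional care about $\tfrac{d\vec z}{dt}\in L^2_tV'$ and the strong attainment of the zero datum at $t=0$ is a point the paper passes over quickly, but it does not change the argument.
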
 

\begin{proof} 
Our proof is inspired by the proof of Theorem 10.1 in Constantin-Foias \cite{CF}, which establishes a 
related uniqueness result for solutions to the Navier-Stokes equations. 

Let $w_j$ with $j=1,2$ be two solutions of \eqref{dns0} with $\vec g$ satisfying  \eqref{2-assum-g1} -  \eqref{lambda-L4L6}. Let $\vv = \ww_1 - \ww_2$. 
Then $\vv$ satisfies the equation:
\begin{equation}\label{eq-v}
\left\{\begin{array}{ll}
\partial_t \vv = \Delta \vv - {\PP} \nabla (\ww_1 \otimes \vv) - {\PP} \nabla (\ww_2 \otimes \vv) + c_1 \left[ {\PP} \nabla (\vv \otimes \vec g) + {\PP} \nabla (\vec g \otimes \vv) \right]\\
\nabla \cdot \vv=0, \\
 \vv( x, 0)  = 0.
\end{array}\right.
\end{equation}

By pairing in $(L^2(\T^2))^2$ the first equation in \eqref{eq-v} with $\vv$ we obtain: 
\begin{eqnarray*}
	( \frac{d}{dt} \vv, \vv ) &=& (\Delta \vv, \vv ) - \int  {\PP} \nabla (\ww_1 \otimes \vv) \cdot \vv \, dx -  \int {\PP} \nabla (\ww_2 \otimes \vv) \cdot \vv \, dx \\
	&&
	                                                 +  c_1 \int [{\PP} \nabla (\vec g \otimes \vv) \cdot \vv  + {\PP}\nabla ( \vv\otimes \vec g) \cdot \vv ]\, dx, 
\end{eqnarray*}
which thanks to the equality $\nabla (\ww_j \otimes \vv) = (\vv \cdot \nabla) \ww_j$ (that is valid for $j=1,2$ since each $w_j$ is divergence free and hence 
$v$ is divergence free too) becomes:  
\begin{eqnarray} 
	( \frac{d}{dt} \vv, \vv ) &= &(\Delta \vv, \vv) - \int  {\PP} ((\vv \cdot \nabla) \ww_1) \cdot \vv \, dx -  \int {\PP} ((\vv \cdot \nabla) \ww_2) \cdot \vv \, dx \\
	                                   &&              +  c_1 \int [{\PP} \nabla (\vec g \otimes \vv) \cdot \vv + {\PP}\nabla ( \vv\otimes \vec g) \cdot \vv ]\, dx.\notag
\end{eqnarray} 	                                                 
Therefore after performing integration by parts in the last term of the above expression, and using H\"older's inequality to bound the last three terms on the RHS we obtain: 
\begin{align} 
	& \frac{d}{dt} \|\vv(t)\|_{L^2_x}^2 + \|\nabla \vv\|_{L^2_x}^2 \nonumber \\ 
	& \quad \quad \leq \| \nabla \ww_1 \|_{L^2_x} \| \vv\|_{L^4_x}^2 + \|\nabla \ww_2 \|_{L^2_x} \| \vv\|_{L^4_x}^2 + c_1 \| \vec g \|_{L^4_x}  \| \vv\|_{L^4_x} \|\nabla \vv\|_{L^2_x} \nonumber \\
	& \quad \quad \leq \| \nabla \ww_1 \|_{L^2_x} \| \vv\|_{L^4_x}^2 + \| \nabla \ww_2 \|_{L^2_x} \| \vv\|_{L^4_x}^2 + c_1 \left( \frac{1}{\mu} \| \vec g \|_{L^4_x}^2  \| \vv\|_{L^4_x}^2  + \mu \|\nabla \vv\|_{L^2_x}^2 \right) \label{uniq-gv-Y1} \\
	& \quad \quad \leq \left( \sum_{j=1}^2 \| \nabla \ww_j \|_{L^2_x} + c_1 \frac{1}{\mu} \| \vec g \|_{L^4_x}^2 \right)  \|\vv\|_{L^2_x} \|\nabla \vv\|_{L^2_x} +  c_1 \mu \|\nabla \vv\|_{L^2_x}^2 \label{uniq-afterinterp} \\
	& \quad \quad \leq \left( \sum_{j=1}^2 \frac{1}{\nu_j} \|\nabla \ww_j\|_{L^2_x}^2 + \frac{c_1^2}{\mu^2 \nu_3} \| \vec g\|_{L^4_x}^4 \right) \|\vv\|_{L^2_x}^2 
	+ (c_1\mu +  \sum_{j=1}^3 \nu_j) \|\nabla \vv\|_{L^2_x}^2 \label{uniq-Y} 
\end{align}
where to obtain \eqref{uniq-gv-Y1} we used Young's inequality, and 
to  obtain \eqref{uniq-afterinterp} we used the bound 
\begin{equation} \label{uniq-intSob} 
	\| \vv\|_{L^4_x} \leq \|\vv\|_{L^2_x}^{\frac{1}{2}} \|\vec\nabla \vv\|_{L^2_x}^{\frac{1}{2}}, 
\end{equation} 
which follows from an interpolation followed by Sobolev embedding. 
On the other hand, we obtained \eqref{uniq-Y} via applying Young's inequality three times. 

Now we choose $\mu$ and $\nu_j$'s such that $c_1 \mu + \sum_{j=1}^3 \nu_j = 1$. Then \eqref{uniq-Y} implies that: 
\begin{equation} 
	\frac{d}{dt} \|\vv(t)\|_{L^2_x}^2
	\leq  \| \vv(t) \|_{L^2_x}^2 \left( \sum_{j=1}^2 \frac{1}{\nu_j} \|\nabla \ww_j\|_{L^2_x}^2  + \frac{c_1^2}{\mu^2 \nu_3} \|\vec g\|_{L^4_x}^4 \right), \label{uniq-beforeGron} 
\end{equation} 
which after we apply Gronwall's lemma on $[0, \rho] \subset [0, T]$ gives: 
\begin{equation} 
	\|\vv(t)\|_{L^2_x}^2
	\lesssim  \| \vv(0) \|_{L^2_x}^2 e^{ \int_0^{\rho} \left( \sum_{j=1}^2 \frac{1}{\nu_j} \|\nabla \ww_j\|_{L^2_x}^2 + \frac{c_1^2}{\mu^2 \nu_3} \|\vec g\|_{L^4_x}^4 \right) \, dt}. \label{uniq-afterGron} 
\end{equation} 
Since by the assumption each $\ww_j \in L^2([0,T], H^1) \cap L^{\infty}((0, T), L^2)$, we have 
\begin{equation} \label{uniq-G1} 
	\int_0^{\rho} \|\nabla \ww_j\|_{L^2_x}^2  \, dt < \infty, 
\end{equation} 	
for every $\rho \leq T$. 
On the other hand, by employing the assumptions  \eqref{2-assum-g1} - \eqref{lambda-L4L6},
we obtain: 
\begin{equation} \label{uniq-after-g-assum}
\int_0^{\rho} \| \vec g\|_{L^4_x}^4 \, dt \lesssim \rho^{-4\gamma} \lambda^4.
\end{equation}


Now we recall that $\vv(0) =\ww_1(0) - \ww_2(0) = 0$, and substitute that into \eqref{uniq-afterGron}, keeping in mind that estimates \eqref{uniq-G1} and \eqref{uniq-after-g-assum}
imply finiteness of the exponent on the RHS of \eqref{uniq-afterGron}. Hence we conclude that $\vv(t) \equiv 0$, which implies $\ww_1(t) = \ww_2(t)$ for all $t \in [0,T]$.  
\end{proof}

\section{Proof of the main theorems}\label{proof-main}

\noindent We find solutions $\uu$ to \eqref{NS} by writing  
$$\uu\, = \, \uu^\omega_{\vec f}\,+\, \ww $$
where we recall that $\uu^\omega_{\vec f}$ is the solution to the linear problem with initial datum 
$\vec f^\omega$ and $\ww$ is a solution to \eqref{dns0} with $\vec g=\uu^\omega_{\vec f}$. Note that $\uu$ is a weak solution for \eqref{NS} in the sense of Definition \ref{weak-sol} if and 
only if $\ww$ is a weak solution for \eqref{dns0} in the sense of Definition \ref{diff-weak-sol}. We also remark that uniqueness of weak solutions to \eqref{dns0} is equivalent to 
uniqueness of weak solutions  to \eqref{NS}. 
From now on we work exclusively with $\ww$ and the  initial value problem \eqref{dns0}.
The proof of the existence of weak solutions  is the same for both $d=2$ and $d=3$ and it is a consequence of Theorem \ref{construction-weak}. For the uniqueness claimed 
in $d=2$ we invoke Theorem \ref{lemma-uniq}.  Now to the details.

Let $\gamma<0$ be such that 
\begin{equation*}
0 < \alpha < 
\left\{\begin{array}{ll}
\frac{1}{2} + 2\gamma, \quad \mbox{ if } d = 2 \\
\frac{1}{4} + 2\gamma, \quad \mbox{ if } d = 3.
\end{array}\right.
\end{equation*}
Given $\lambda >0$, define the set 
\begin{equation}\label{setE-new}
E_{\lambda}:=E_{\lambda,\alpha,\vec f,\gamma,T} \,=\, \{\omega\in \Omega \, \, / \, \, 
\|t^\gamma\uu^\omega_{\vec f} \|_{L^4{([0,T];L^4_x)}}  \, > \, \lambda \},
\end{equation} when $d=2$.   For $d=3$, set 
\begin{equation*} 
\|  \uu^\omega_{\vec f} \|_{(\alpha,\vec f,\gamma,T)}:= \| t^{\gamma} (-\Delta)^{\frac{1}{4}} \uu^\omega_{\vec f} \|_{L^2([0,T];L^6_x)}  \, +\,  \|t^\gamma (-\Delta)^{\frac{1}{4}} \uu^\omega_{\vec f} \|_{L^\frac{8}{3}([0,T];L^\frac{8}{3}_x)}+ \|t^\gamma\uu^\omega_{\vec f} \|_{L^8([0,T];L^8_x) } 
\end{equation*} and define 
\begin{equation}\label{setE-new}
E_{\lambda}:= E_{\lambda,\alpha,\vec f,\gamma,T} \, =\, \{\omega\in \Omega \, \, / \, \,   \, \|  \uu^\omega_{\vec f} \|_{(\alpha,\vec f,\gamma,T)} \, > \, \lambda \}.
\end{equation} 
Then if we apply Proposition \ref{prop-heatfl} we find that in either case ($d=2$ or $d=3$) there exist $C_1, C_2>0$ such that
\begin{equation}\label{exp-new}
P(E_\lambda)\leq C_1\exp\left[ -C_2\left(\frac{\lambda}{C_T\|\vec f\|_{H^{-\alpha}}}\right)^2\right].
\end{equation} 

Now, let $\lambda_j=2^j, \, j\geq 0$ and define $E_j=E_{\lambda_j}$. Note that $E_{j+1}\subset E_{j}$. Then if we let
$\Sigma:=\cup E_j^c \subset \Omega$ we have that 
$$  1\geq P(\Sigma)
	=1-\lim_{j\rightarrow \infty}P(E_j)
	\geq 1-\lim_{j\rightarrow \infty}\exp\left[ -C_2\left(\frac{2^j}{C_T\|\vec f\|_{H^{-\alpha}}}\right)^2\right]=1.$$
Our goal is now to show that for a fixed divergence free vector field $\vec f\in (H^{-\alpha}(\T^d))^d$ and for  any  $\omega\in \Sigma$, 
if we define $\vec g=\uu^\omega_{\vec f}$, the  initial value problem \eqref{dns0} has a global weak solution. In fact given $\omega\in \Sigma$,
there exists $j$ such that $\omega\in E_j^c$.  In particular we then have 
\begin{equation}\label{lambdaj-L4}\|t^\gamma \vec g\|_{L^4{([0,T];L^4_x)}} \leq \lambda_j 
\end{equation} when $d=2$ and 
\begin{equation}\label{lambdaj-L8}   \| t^{\gamma} (-\Delta)^{\frac{1}{4}} \vec g \|_{L^2([0,T];L^6_x)}  + \|t^\gamma (-\Delta)^{\frac{1}{4}} \vec g\|_{L^{\frac{8}{3}}([0,T];L^{\frac{8}{3}}_x)} +  \|t^\gamma \vec g\|_{L^8{([0,T];L^8_x)}} \leq \lambda_j
\end{equation} when $d=3$.

Lemma \ref{lemma-deterministic} together with \eqref{lambdaj-L4} and \eqref{lambdaj-L8} imply that the assumptions on $\vec g$ in 
Theorem \ref{th-dns}, Theorem \ref{construction-weak} and Theorem \ref{lemma-uniq} are satisfied. This concludes the proof.



\end{document}